\title[ ]{ Floquet isospectrality for  periodic  graph operators}
\author{Wencai Liu}
\address[W. Liu]{ Department of Mathematics, Texas A\&M University, College Station, TX 77843-3368, USA} \email{liuwencai1226@gmail.com; wencail@tamu.edu}
\keywords{Floquet isospectrality,  isospectrality, Fermi variety, Bloch variety, Fermi isospectrality, discrete periodic Schr\"odinger operator, separable function, triangular lattice.}
\thanks{{\em 2020 Mathematics Subject Classification.} Primary: 47B36. Secondary: 35P05,  35J10.}
\theoremstyle{plain}
\newtheorem{theorem}{Theorem}[section]
\newtheorem{corollary}[theorem]{Corollary}
\newtheorem{lemma}[theorem]{Lemma}
\newtheorem{proposition}[theorem]{Proposition}
\newcommand{\C}{\mathbb{C}}
\newcommand{\Z}{\mathbb{Z}}
\newcommand{\R}{\mathbb{R}}
\theoremstyle{plain}
\newtheorem{definition}{Definition}
\newtheorem{conjecture}{Conjecture}
\begin{document}
	
	
	\begin{abstract}
		Let  $\Gamma=q_1\mathbb{Z}\oplus q_2 \mathbb{Z}\oplus\cdots\oplus q_d\mathbb{Z}$ with arbitrary positive integers $q_l$, $l=1,2,\cdots,d$.
		Let $\Delta_{\rm discrete}+V$    be the discrete Schr\"odinger operator on $\Z^d$, 
		where $\Delta_{\rm discrete}$ is the discrete Laplacian on $\mathbb{Z}^d$ and the function $V:\mathbb{Z}^d\to \mathbb{C}$ is $\Gamma$-periodic. 
		We prove  two  rigidity theorems for   discrete periodic Schr\"odinger operators:
		\begin{enumerate}
			\item If  real-valued $\Gamma$-periodic  functions $V$ and $Y$ satisfy $\Delta_{\rm discrete}+V$ and $\Delta_{\rm discrete}+Y$ are Floquet isospectral and $Y$ is separable, then $V$ is separable.
			\item
			If  complex-valued  $\Gamma$-periodic functions $V$ and $Y$ satisfy $\Delta_{\rm discrete}+V$ and $\Delta_{\rm discrete}+Y$ are Floquet isospectral,  and both  $V=\bigoplus_{j=1}^rV_j$ and $Y=\bigoplus_{j=1}^r  Y_j$ are separable   functions,  then, up to a constant,  lower dimensional decompositions  $V_j$  and $Y_j$ are Floquet isospectral, $j=1,2,\cdots,r$.

		\end{enumerate}
		Our theorems extend the results of Kappeler.    Our approach is developed from  the author's recent work on Fermi isospectrality and can be applied to study more general lattices.

	\end{abstract}
	
	\maketitle 
	\section{Introduction and main results}

	In this paper,
	we  first study the discrete periodic Schr\"odinger equation
	\begin{equation} 
	(\Delta _{\rm discrete}u)(n)+V(n)u(n)=\lambda u(n) \label{spect_0}, n\in\Z^d,
	\end{equation}
	with the so called Floquet-Bloch boundary condition
	\begin{equation}  
	u(n+q_j\textbf{e}_j)=e^{2\pi i k_j}u(n),j=1,2,\cdots,d, \text{ and } n\in \Z^d, \label{FI}
	\end{equation}
	where $ \Delta_{\rm discrete}$ is the discrete Laplacian on $\Z^d$, $ \{\textbf{e}_j\}_{j=1}^d$ is the standard basis in $\R^d$ and $V$ is $ \Gamma$-periodic  with  $\Gamma=q_1\Z\oplus q_2 \Z\oplus\cdots\oplus q_d\Z$.

	The equation~\eqref{spect_0} with the boundary condition \eqref{FI} can be realized as  an eigen-equation of a 
	matrix $D_{V} (k)$, where  $k=(k_1,k_2,\cdots,k_d)$.
	Denote by $\sigma( D_{V} (k)) $ the (counting the algebraic multiplicity) eigenvalues of $D_{V} (k)$.
	Since we only discuss discrete   periodic Schr\"odinger equations \eqref{spect_0} and \eqref{FI} before Section 5,  for simplicity,  write $\Delta$ for $\Delta _{\rm discrete}$.
	We say  two $\Gamma$-periodic operators $\Delta+V$ and $\Delta+Y$   (or  say two  $\Gamma$-periodic  functions/potentials  $V$ and $Y$)  are
	Floquet isospectral if  
	\begin{equation}\label{gfi}
	\sigma(D_{V} (k))= \sigma(D_{Y} (k)), \text{ for any } k \in\R^d.
	\end{equation}

	The study in 	understanding when periodic potentials $V$ and $Y$ are Floquet isospectral  starts about  four decades ago
	~\cite{ERT84,MT76,kapiii,Kapi,Kapii,ERTII,gki,wa,gkii,gui90,eskin89}.  We refer readers to two  surveys  \cite{ksurvey,liujmp22} for the background  and recent development.

	The paper aims to investigate   when periodic potentials are Floquet isospectral to separable potentials.
	We say that a function $V$ on $\Z^d$ is $(d_1,d_2,\cdots,d_r)$ separable (or simply separable), where  $\sum_{j=1}^r d_j= d$ with $r\geq 2$, if there exist functions 
	$V_j$  on $\Z^{d_j}$, $j=1,2,\cdots,r$,   
	such that  for any $(n_1,n_2,\cdots,n_d)\in\Z^d$,
	\begin{align}
	V(n_1,n_2,\cdots,n_d)=&V_1(n_1,\cdots, n_{d_1})+V_2(n_{d_1+1},n_{d_1+2},\cdots,n_{d_1+d_2})\nonumber\\
	&+\cdots+V_r(n_{d_1+d_2+\cdots +d_{r-1}+1},\cdots,n_{d_1+d_2+\cdots +d_r}).\label{g61}
	\end{align}

	We   say  $V:\Z^d\to \C$ is completely  separable if $V$ is $(1,1,\cdots, 1)$ separable. 
	When there is no ambiguity, we write down \eqref{g61} as $V=\bigoplus_{j=1}^r V_j$.
	
	In   \cite{ERT84,ERTII,gki}, Eskin-Ralston-Trubowitz and Gordon-Kappeler asked the following two questions:
	\begin{enumerate}
		\item [Q1.]   If   real-valued functions $Y$ and $V$ are Floquet isospectral,  and $Y$ is  separable,  is $V$   separable?
		\item [Q2.] Assume that both $Y=\bigoplus_{j=1}^rY_j$ and $V=\bigoplus_{j=1}^r V_j$   are   separable. If $V$ and $Y$ are Floquet   isospectral,   are the lower dimensional components   $V_j$ and $Y_j$ (up to a constant) Floquet  isospectral ?
	\end{enumerate}  
	We remark that the original questions (Q1 and Q2) in \cite{ERT84,ERTII,gki} were asked in the setting of  continuous Schr\"odinger equations  with  completely separable functions. 
	It is natural to ask the above two questions in the discrete cases with arbitrary separable functions 
	(e.g., ~\cite[Section 5]{ksurvey}).  
	
	In \cite{kapiii}, Kappeler partially answered  questions Q1 and Q2:
	\begin{theorem}\cite{kapiii}\label{oldthm1}
		Let $q_1=q_2=\cdots=q_d$. 
		Then the following statements hold:
		\begin{enumerate}
			\item   If   real-valued  $\Gamma$-periodic functions $Y$ and $V$ are Floquet isospectral,  and $Y$ is completely separable,  then  $V$ completely separable. 
			\item  Assume that  complex-valued  $\Gamma$-periodic functions $Y=\bigoplus_{j=1}^dY_j$ and $V=\bigoplus_{j=1}^d V_j$   are completely separable. If $V$ and $Y$ are Floquet   isospectral,   then up to a constant, the one-dimensional functions $V_j$ and $Y_j$ are Floquet  isospectral. 
		\end{enumerate}  
	\end{theorem}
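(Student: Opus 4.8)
The plan is to translate Floquet isospectrality into an identity between characteristic polynomials and then to extract the separable structure of the potential from the rigid shape of the polynomial on the separable side. First I would substitute $z_j = e^{2\pi i k_j}$, so that $D_V(k)$ becomes a matrix $D_V(z)$ whose entries are Laurent polynomials in $z_1,\dots,z_d$, and set
\[
P_V(z,\lambda) := \det(\lambda I - D_V(z)) \equiv \det(\lambda I - D_Y(z)) =: P_Y(z,\lambda).
\]
Since Floquet isospectrality means $\sigma(D_V(k)) = \sigma(D_Y(k))$ with algebraic multiplicity for every $k\in\mathbb{R}^d$, the two monic (in $\lambda$) Laurent polynomials agree on the real torus, hence are identically equal; equivalently the Bloch varieties coincide. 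All subsequent work is carried out at the level of these Laurent polynomials.

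Next I would record the special form of the separable side. When $Y=\bigoplus_{j=1}^d Y_j$ is completely separable, $\Delta+Y$ splits as a Kronecker sum of the one-dimensional operators $\Delta+Y_j$, so, counting multiplicity,
\[
\sigma(D_Y(z)) = \Big\{\, \mu_1+\cdots+\mu_d : \mathcal{D}_j(\mu_j)=z_j+z_j^{-1},\ 1\le j\le d \,\Big\},
\]
where $\mathcal{D}_j$ is the (monic, degree $q$) discriminant of the $j$-th factor. In particular $P_Y$ depends on each $z_j$ only through the symmetric combination $z_j+z_j^{-1}$, and its Newton polytope together with the coefficients at the extreme monomials in $z$ are completely explicit. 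Here I would invoke the structural lemmas underlying the author's Fermi-variety work (the description of the Newton polytope of $P_V$ and the nonvanishing constants at its extreme vertices), specialized to the cubic fundamental domain $q_1=\cdots=q_d$.

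For statement (1) I would then recover separability of $V$ from $P_V=P_Y$. Writing $P_V(z,\lambda)=\lambda^{N}-c_1(z)\lambda^{N-1}+\cdots$ with $N=q^{d}$, the coefficients $c_p(z)$ are, through Newton's identities, governed by the traces $\operatorname{tr} D_V(z)^p$, which are generating functions for closed walks on the torus weighted by products of potential values; their $z$-dependence records exactly the walks that wrap around the fundamental domain, while cross-direction correlations of $V$ appear in prescribed coefficients. Matching these coefficients against those of the rigid, coordinate-by-coordinate separable polynomial $P_Y$ forces every cross-direction correlation of $V$ to factor, and, using real-valuedness to pass from this data back to the potential, one concludes $V(n_1,\dots,n_d)=\sum_j V_j(n_j)$. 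I expect this recovery to be the main obstacle: identifying precisely which coefficients of $P_V$ encode the cross-correlations and showing that the $z_j+z_j^{-1}$-only structure of $P_Y$ annihilates all of them is the crux, and it is where the explicit extreme-monomial computation, the irreducibility results for Fermi varieties, the hypothesis $q_1=\cdots=q_d$, and self-adjointness are all used.

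For statement (2), with both $V=\bigoplus_j V_j$ and $Y=\bigoplus_j Y_j$ separable, both Bloch varieties are ``sum-set'' varieties of the above form, now built from discriminants $\mathcal{D}^V_j$ and $\mathcal{D}^Y_j$. The goal is to match the individual one-dimensional factors. Since each one-dimensional discriminant locus $\mathcal{D}_j(\mu)=z_j+z_j^{-1}$ is an irreducible curve, I would argue that the decomposition of the total sum-set variety into contributions from the separate coordinate blocks is essentially unique, and thereby extract a permutation $\pi$ of $\{1,\dots,d\}$ together with constants $c_j$ such that $\mathcal{D}^V_j(\mu)=\mathcal{D}^Y_{\pi(j)}(\mu-c_j)$ — the additive constants arising from the freedom to redistribute a constant shift among the summands $\mu_j$ while preserving $\lambda=\sum_j\mu_j$. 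This is exactly the assertion that $V_j$ and $Y_{\pi(j)}$ are Floquet isospectral up to a constant. The difficulty here is again the disentangling step, namely ruling out spurious recombinations of the one-dimensional pieces, for which I would rely on the irreducibility results furnished by the Fermi-isospectrality machinery.
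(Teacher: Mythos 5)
The paper does not reprove Kappeler's theorem directly: it cites \cite{kapiii} and instead establishes the stronger Theorems \ref{thmmain2} and \ref{thmmain3}, so the relevant comparison is with those proofs. Your starting point --- reducing Floquet isospectrality to the polynomial identity $\mathcal{P}_V(z,\lambda)\equiv\mathcal{P}_Y(z,\lambda)$ (Proposition \ref{le1}) and exploiting the rigid form of the separable side --- agrees with the paper's, but both steps that you yourself flag as ``the crux'' are genuine gaps, and in each case the paper supplies a concrete mechanism that your outline lacks. For statement (1), it is not enough to say that matching coefficients ``forces every cross-direction correlation of $V$ to factor'': the coefficients of $\mathcal{P}_V$ are complicated symmetric functions of the Fourier coefficients $\hat V(l)$, and no single coefficient isolates an individual cross-correlation. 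What the paper actually extracts (Lemma \ref{key1} and Theorem \ref{key4}, from the terms of total degree $Q-1$ and $Q-2$ in $(z,\lambda)$ of $\det(A+B_V-\lambda I)$ after the Floquet transform) are the \emph{quadratic} invariants $[V]=[Y]$, $\sum_{l\in W}|\hat V(l)|^2=\sum_{l\in W}|\hat Y(l)|^2$, and, by specializing $z$ so that only one coordinate block survives, $\sum_{l\in W_j}|\hat V(l^{j\neg})|^2=\sum_{l\in W_j}|\hat Y(l^{j\neg})|^2$ for each block. Subtracting, the sum of $|\hat V(l)|^2$ over the set $S$ of cross frequencies equals the corresponding sum for $Y$, which is zero since $Y$ is separable (Lemma \ref{leSeparable1}); because $V$ is real-valued, $B_V$ is Hermitian and each term $|\hat V(l)|^2$ is nonnegative, so every cross Fourier coefficient of $V$ vanishes individually. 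This positivity step is the decisive use of real-valuedness and is absent from your plan; knowing only polynomial identities among sums of products of Fourier coefficients, one cannot otherwise conclude that individual coefficients vanish, which is precisely why the statement is restricted to real potentials.

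For statement (2), your proposed route --- unique decomposition of the ``sum-set'' Bloch variety into one-dimensional discriminant curves, with spurious recombinations ruled out by irreducibility --- is exactly the hard part, and you give no argument for it; note also that the permutation $\pi$ you introduce cannot arise, since the $j$-th discriminant is already pinned to the variable $z_j$ by the equation $\mathcal{D}_j(\mu_j)=z_j+z_j^{-1}$. The paper's proof of Theorem \ref{thmmain3} avoids the decomposition problem entirely: after normalizing $[V_j]=[Y_j]=0$ (legitimate by $[V]=[Y]$), it substitutes $\lambda=y+\sum_{j=1}^{d_1}\left(z_j+z_j^{-1}\right)$, so that the diagonal entries indexed by $n$ with nonzero first block lose their constant term while those with zero first block reduce to the diagonal of the lower-dimensional problem in $y$; comparing the coefficients of the top-degree monomials in $(z_1,\dots,z_{d_1})$ on both sides of $\tilde{\mathcal{P}}_V\equiv\tilde{\mathcal{P}}_Y$ then yields $\tilde{\mathcal{P}}_{V_2}(\cdot,y)\equiv\tilde{\mathcal{P}}_{Y_2}(\cdot,y)$ directly, and symmetrically for the other block. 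I would encourage you to replace the trace/Newton-polytope and variety-decomposition sketches with these two mechanisms (the $\ell^2$ Fourier invariants plus positivity, and the leading-coefficient substitution), since as written your argument does not close at either of its critical points.
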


	The proof of Theorem \ref{oldthm1} relies on spectral invariants of  discrete Schr\"odinger equations. 
	We will  introduce  a  different approach to study  the  Floquet isospectrality based on the idea introduced by the author in \cite{liu2021fermi}.
	

	\begin{definition}
		The  {\it Bloch variety} $B(V)$ of  $\Delta+V$ is defined as  $$B(V)=\{(k,\lambda)\in\C^{d+1}: \det (D_{V}(k)-\lambda I)=0\}.$$
		Given $\lambda\in \C$, the Fermi surface (variety) $F_{\lambda}(V)$ is defined as the level set of the  Bloch variety:
		\begin{equation*}
		F_{\lambda}(V)=\{k\in \C^d: (k,\lambda)\in B(V)\}.
		\end{equation*}
	\end{definition}

	Both Fermi and Bloch varieties (zero sets of $\det (D_{V}(k)-\lambda I)$) play a crucial   role in the study of  
	(inverse) spectral and  related problems arising in  periodic operators,  such as embedded eigenvalue problems and integrated density of states ~\cite{GKTBook,ktcmh90,bktcm91,kv06cmp,kvcpde20,shi1,AIM16,IM14}.  Recently, the author discovered that the algebraic properties of  Fermi and Bloch varieties ($\det (D_{V}(k)-\lambda I)$ is  algebraic in appropriate coordinates) are related to more spectral problems such as the  quantum ergodicity~\cite{liu2022bloch,ms22},  spectral edges~\cite{liu1} and    isospectrality problems~\cite{liu2021fermi,liu2d}.

	A basic fact of linear algebra allows  to reformulate Floquet isospectrality as  $\det (D_{V}(k)-\lambda I)\equiv\det (D_{Y}(k)-\lambda I)$.  Like the author's recent work \cite{liu2021fermi,liu2022bloch,liu1},  we will focus on the investigation of $\det (D_{V}(k)-\lambda I)$ in this paper. One of the advantages of using  $\det (D_{V}(k)-\lambda I)$  (Bloch/Fermi varieties) is  to enable us    to apply  various  tools    from algebraic/analytic geometry  and complex analysis.

	
	

	In ~\cite{liu1}, the author   proved the irreducibility conjectures of  Bloch and Fermi varieties (see \cite{flm22,flm23,fg} for  more recent works of the  irreducibility of Bloch and Fermi varieties),   where those conjectures were only previously studied for $d=2,3$  \cite{GKTBook,batcmh92,battig1988toroidal}.  
	This implies that  functions $V$ and $Y$ are Floquet isospectral  iff   $B(V)=B(Y)$. In \cite{liu2021fermi}, 
	the author introduced a new type of inverse spectral problems-Fermi isospectrality.

	\begin{definition}\label{fermiiso}	\cite{liu2021fermi}
		Let $V$ and $Y$ be two $\Gamma$-periodic functions. We say  $V$ and $Y$ are Fermi isospectral if there exists    some $\lambda_0\in\C$ such that 	${F}_{\lambda_0} (V)={F}_{\lambda_0} (Y)$. 
	\end{definition}

	From the definitions, one can see that  Fermi isospectrality is a ``hyperplane" version (weaker assumption) of Floquet isospectrality.
	In \cite{liu2021fermi}, the author  proved the following rigidity  statements.
	
	\begin{theorem} \cite{liu2021fermi}\label{thm1}
		Assume that $d\geq 3$ and $q_j$, $j=1,2,\cdots, d$ are piecewise co-prime.
		Then the following statements hold:
		\begin{enumerate}
			\item   If   real-valued  $\Gamma$-periodic functions $Y$ and $V$ are  Fermi isospectral,  and $Y$ is   separable,  then  $V$   separable. 
			\item  Assume that   complex-valued   $\Gamma$-periodic functions    $Y=\bigoplus_{j=1}^rY_j$ and $V=\bigoplus_{j=1}^rV_j$   are  separable. If $V$ and $Y$ are Fermi  isospectral,   then up to a constant, lower dimensional decompositions V$_j$ and $Y_j$ are Floquet isospectral, $j=1,2,\cdots,r$.
		\end{enumerate}  
	\end{theorem}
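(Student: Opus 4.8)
The plan is to convert the geometric hypothesis of Fermi isospectrality into an algebraic identity of Laurent polynomials and then recover the block structure by a degeneration argument. Writing $z_j = e^{2\pi i k_j}$, the determinant $\det(D_V(k) - \lambda_0 I)$ becomes a Laurent polynomial $P_V(z)$ in $z = (z_1, \ldots, z_d)$. First I would use the irreducibility of the Fermi variety from \cite{liu1} to upgrade the set-theoretic equality $F_{\lambda_0}(V) = F_{\lambda_0}(Y)$ to the proportionality $P_V(z) = c\, P_Y(z)$ for some nonzero constant $c$. The extreme coefficients of $P_V$ --- those attached to the highest and lowest powers of each $z_j$, which arise from winding once around a single coordinate cycle and therefore depend only on the constant hopping of $\Delta$, not on the potential --- are universal; comparing them across the identity normalizes $c$ and supplies the first rigidity constraints.

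Next I would make the separable structure explicit. Since $\Delta = \sum_{j} \Delta^{(j)}$ splits over the coordinate blocks, a separable potential $Y = \bigoplus_{j=1}^r Y_j$ produces a Floquet matrix that is a sum of tensor factors, so the eigenvalues of $D_Y(k)$ are the sums $\mu_{1,a_1} + \cdots + \mu_{r,a_r}$ of the block eigenvalues. Splitting $k = (k^{(1)}, \ldots, k^{(r)})$ and correspondingly $z = (z^{(1)}, \ldots, z^{(r)})$ according to the blocks, one gets $P_Y(z) = \prod_{(a_1,\ldots,a_r)} \big(\lambda_0 - \sum_{j} \mu_{j,a_j}(z^{(j)})\big)$, a convolution-type structure on the roots that is symmetric in each block and hence a polynomial in the coefficients of the individual block characteristic polynomials $\det(D_{Y_j}(k^{(j)}) - \mu I)$. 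The objective is to force $P_V$ to inherit this factorization and to identify the blocks.

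The technical core, and the step I expect to be the main obstacle, is a degeneration analysis that isolates one block at a time. Fixing a block $r$ and letting its Floquet variables $z^{(r)}$ run to a degeneration (for instance to infinity along a suitable ray), the block eigenvalues $\mu_{r,a}(z^{(r)})$ that carry the winding dominate and decouple from the other blocks, so the leading part of $P_V = c\,P_Y$ in the direction $z^{(r)}$ factors as a lower-dimensional Fermi polynomial in the complementary variables times a universal block-$r$ contribution. Matching the leading parts on the two sides yields an identity between lower-dimensional objects, and iterating (induction on the number of blocks $r$, or on $d$) peels the components off one by one. Here the hypotheses $d \geq 3$ and piecewise co-primality of the $q_j$ are essential: co-primality keeps the Newton polytopes of distinct blocks from overlapping except along the expected faces, so the degeneration separates a single block rather than mixing several, while $d \geq 3$ leaves the residual Fermi variety of dimension at least two and thus still irreducible after a block is removed. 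For part (2) the matched block factors show that $V_j$ and $Y_j$ are Floquet isospectral up to an additive constant; for part (1) the forced product factorization of $P_V$, together with the reality of $V$ (which removes the spurious complex solutions that could otherwise reproduce the same Fermi polynomial without being separable), shows that $V$ is itself separable.

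The crux of the difficulty is that Fermi isospectrality supplies the polynomial identity only on the single slice $\lambda = \lambda_0$, whereas the conclusion --- Floquet isospectrality of the components --- concerns the full $\lambda$-dependence of each block. The degeneration must therefore recover the entire characteristic polynomial of each block from one level set, which hinges on controlling the leading coefficients so that they neither vanish nor collide under the chosen specialization; establishing this non-degeneration, exactly what $d \geq 3$ and co-primality are designed to guarantee, is where I expect most of the work to concentrate.
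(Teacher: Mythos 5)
The paper does not actually reprove Theorem \ref{thm1}: it is quoted from \cite{liu2021fermi}, whose method the introduction summarizes as ``a direct way to show $\hat V(l_1,l_2)=0$'' and whose machinery reappears here in Lemma \ref{lesep}, Lemma \ref{key1} and the appendix. Measured against that, your opening moves are on target: invoking irreducibility of the Fermi variety (this is where $d\geq 3$ and piecewise co-primality genuinely enter, as hypotheses of the irreducibility theorem of \cite{liu1}, rather than through Newton-polytope non-overlap) to upgrade $F_{\lambda_0}(V)=F_{\lambda_0}(Y)$ to $\mathcal P_V(z,\lambda_0)=c\,\mathcal P_Y(z,\lambda_0)$, and normalizing $c$ via the potential-independent extreme coefficients, is exactly the first step; and your block-by-block degeneration is the right shape for part (2) (compare the specialization $\lambda=y+\sum_{j\le d_1}(z_j+z_j^{-1})$ and the comparison of top-degree terms in the proof of Theorem \ref{thmmain3}).

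The genuine gap is in part (1). You assert that the ``forced product factorization of $P_V$'' together with reality of $V$ yields separability, but you never supply the mechanism that passes from an identity of characteristic polynomials to a statement about the potential itself; that implication is the entire content of the theorem, so as written the step is circular. The actual mechanism is Fourier-analytic: after the discrete Floquet transform, $\tilde{\mathcal D}_V$ is unitarily equivalent to $A+B_V$ with $B_V(n;n^\prime)=\hat V(n-n^\prime)$, and expanding the determinant shows that the subleading (degree $Q-2$) part of $\tilde{\mathcal P}_V$ carries the quantities $B_V(n;n^\prime)B_V(n^\prime;n)=|\hat V(n-n^\prime)|^2$, the modulus arising from Hermitian symmetry when $V$ is real. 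Suitable specializations of $z$ and $\lambda$ then isolate sums of $|\hat V(l)|^2$ over the index sets with at least two nonzero blocks; these sums equal the corresponding sums for $Y$, which vanish because $Y$ is separable; and positivity of each $|\hat V(l)|^2$ --- this, not the exclusion of ``spurious complex solutions,'' is the role of real-valuedness --- forces every term to vanish, whence $V$ is separable by the Fourier criterion (Lemma \ref{leSeparable1}). Without some version of this coefficient-extraction-plus-positivity argument, your degeneration only constrains the spectra $\sigma(D_V(k))$ and not the function $V$, and part (1) does not close.
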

	Theorem \ref{thm1} immediately implies 
	\begin{corollary} \label{oldcor}
		Assume that $d\geq 3$ and $q_j$, $j=1,2,\cdots, d$ are piecewise co-prime.
		Then the following statements hold:
		\begin{enumerate}
			\item   If   real-valued  $\Gamma$-periodic functions $Y$ and $V$ are  Floquet isospectral,  and $Y$ is   separable,  then  $V$   separable. 
			\item  Assume that  complex-valued  $\Gamma$-periodic functions  $Y=\bigoplus_{j=1}^rY_j$ and $V=\bigoplus_{j=1}^rV_j$   are  separable. If $V$ and $Y$ are Floquet isospectral,,   then up to a constant, lower dimensional decompositions V$_j$ and $Y_j$ are Floquet isospectral, $j=1,2,\cdots,r$.
		\end{enumerate}  
	\end{corollary}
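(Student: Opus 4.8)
The plan is to derive Corollary~\ref{oldcor} directly from Theorem~\ref{thm1} by showing that Floquet isospectrality is a strictly stronger hypothesis than Fermi isospectrality; once this implication is in hand, both parts of the corollary follow verbatim from the corresponding parts of the theorem.

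First I would invoke the reformulation recorded in the introduction: two $\Gamma$-periodic potentials $V$ and $Y$ are Floquet isospectral precisely when $\det(D_V(k)-\lambda I)\equiv\det(D_Y(k)-\lambda I)$ as a polynomial identity in the variables $(k,\lambda)$. This equivalence rests on the observation that equality of the spectra (counting algebraic multiplicity) of $D_V(k)$ and $D_Y(k)$ for every real $k$ forces the two characteristic polynomials to agree pointwise, and, by analyticity in the Floquet exponents, the identity then extends to all $k\in\C^d$ and all $\lambda\in\C$.

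Next I would specialize this identity to an arbitrary fixed level $\lambda=\lambda_0$. Since $\det(D_V(k)-\lambda_0 I)$ and $\det(D_Y(k)-\lambda_0 I)$ coincide as polynomials in $k$, their zero sets in $\C^d$ are identical, that is, $F_{\lambda_0}(V)=F_{\lambda_0}(Y)$. By Definition~\ref{fermiiso} this says exactly that $V$ and $Y$ are Fermi isospectral (in fact at every level $\lambda_0$, though one level suffices). Hence every Floquet isospectral pair is automatically Fermi isospectral.

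Finally, under the standing hypotheses $d\geq 3$ and $q_j$, $j=1,2,\cdots,d$ piecewise co-prime, I would apply Theorem~\ref{thm1}: part~(1) yields separability of $V$ whenever $Y$ is separable, and part~(2) yields that the lower-dimensional decompositions $V_j$ and $Y_j$ are, up to a constant, Floquet isospectral. These are precisely the two assertions of Corollary~\ref{oldcor}. I do not expect any genuine obstacle here: the only point requiring verification is the elementary implication Floquet~$\Rightarrow$~Fermi, and the entire substantive content already resides in Theorem~\ref{thm1}, so the corollary is truly immediate.
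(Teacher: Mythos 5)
Your proposal is correct and matches the paper's (implicit) argument: the paper states that Corollary~\ref{oldcor} follows immediately from Theorem~\ref{thm1}, and the only content needed is exactly the observation you supply, namely that Floquet isospectrality forces $\det(D_V(k)-\lambda I)\equiv\det(D_Y(k)-\lambda I)$ and hence $F_{\lambda_0}(V)=F_{\lambda_0}(Y)$ for every (in particular, some) $\lambda_0$, i.e.\ Fermi isospectrality. Nothing further is required.
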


	The aim of this paper is twofold. Firstly, we   answer Q1 and Q2 for arbitrary positive integers $q_l$, $l=1,2,\cdots, d$  and any types of separable functions which extends Theorem \ref{oldthm1} and Corollary \ref{oldcor}.  Secondly, we   propose a simple and new approach   to study the Floquet isospectrality based on the ideas from ~\cite{liu2021fermi}. It turns out that our approach is quite robust which works for more general lattices (see Section \ref{Stri}).
	
	Our main results in the present paper are
	\begin{theorem}\label{thmmain2}
		Assume that real-valued  $\Gamma$-periodic  functions $V$ and $Y$ are  Floquet isospectral, and $Y$ is $(d_1,d_2,\cdots,d_r)$ separable, then
		$V$ is $(d_1,d_2,\cdots,d_r)$ separable.
	\end{theorem}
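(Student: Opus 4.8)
The plan is to work entirely with the characteristic polynomial $P_V(k,\lambda):=\det(D_V(k)-\lambda I)$, using the reformulation recorded above: $V$ and $Y$ are Floquet isospectral if and only if $P_V(k,\lambda)\equiv P_Y(k,\lambda)$. Writing $z_j=e^{2\pi i k_j}$, each $P_V$ is a Laurent polynomial in $z_1,\dots,z_d$ with coefficients polynomial in $\lambda$; since $V$ is real, $D_V(k)$ is Hermitian and $P_V$ is invariant under every $z_j\mapsto z_j^{-1}$, so $P_V$ is in fact an honest polynomial in the variables $x_j:=z_j+z_j^{-1}$ and in $\lambda$. First I would record the two structural facts that drive everything: (i) the off-diagonal (Laplacian) part is common to all potentials, so the top power of each $x_j$ in $P_V$ comes only from pure windings and its coefficient is a universal constant, independent of $V$; and (ii) if $Y=\bigoplus_{j=1}^r Y_j$ is separable then $D_Y(k)$ is the tensor sum $\sum_j I\otimes\cdots\otimes D_{Y_j}(\cdot)\otimes\cdots\otimes I$, so its eigenvalues are the sums of block eigenvalues and, with $N:=q_1q_2\cdots q_d$,
\[
P_Y(k,\lambda)=(-1)^{N}\prod_{s_1,\dots,s_r}\Bigl(\lambda-\sum_{j=1}^r\alpha_{j,s_j}(k^{(j)})\Bigr),
\]
where $k=(k^{(1)},\dots,k^{(r)})$ and $\alpha_{j,\cdot}(k^{(j)})$ are the eigenvalues of the $d_j$-dimensional block $D_{Y_j}(k^{(j)})$. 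Equivalently, $P_Y$ can be written as an iterated resultant of the block characteristic polynomials in the shift variable $\lambda$—a closed ``Minkowski-sum'' form that $P_V$ must therefore share.

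Next I would induct on the number of blocks $r$, peeling off one block at a time. Fixing generic values of the variables attached to blocks $2,\dots,r$, I regard $P_V\equiv P_Y$ as a polynomial in the first block's variables $x^{(1)}=(x_1,\dots,x_{d_1})$. By (i) the leading coefficient is universal; the separable form above shows that the \emph{next} coefficients of $P_Y$ are assembled from the single lower-dimensional polynomial $P_{Y_1}$ composed with the eigenvalue data of the remaining blocks. Matching these coefficients against those of $P_V$ should (a) identify the ``first-block marginal'' operator of $V$ up to Floquet isospectrality with $\Delta+Y_1$, exactly as in the second statements of Theorem~\ref{thm1} and Corollary~\ref{oldcor}, and (b) reduce the residual data of $V$ to an $(r-1)$-block problem with a separable partner, to which the inductive hypothesis applies. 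Carrying this out for arbitrary $q_l$ and arbitrary block sizes—rather than the equal-period, completely separable situation of Theorem~\ref{oldthm1}—is precisely where the present approach improves on the earlier arguments; and it needs neither a co-primality nor a $d\ge 3$ hypothesis, because the full $\lambda$-dependence of the Floquet datum furnishes the polynomial identity $P_V\equiv P_Y$ \emph{directly}, with no appeal to the irreducibility results underlying the Fermi-isospectral Theorem~\ref{thm1}.

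The main obstacle is the inverse step: passing from the resultant/tensor-sum \emph{form} of $P_V$ back to a statement about the values $V(n)$ themselves, namely that every mixed cross-block second difference of $V$ vanishes—the linear conditions characterizing $(d_1,\dots,d_r)$ separability. Here I would extract from $P_V\equiv P_Y$ a single scalar spectral quantity, assembled from the sub-leading coefficients of $P_V$ in the Bloch variables, which for a \emph{general real} $V$ evaluates to a sum of squares of these cross-block second differences and which vanishes identically when the partner $Y$ is separable. Since $P_V\equiv P_Y$, this nonnegative quantity is zero, and reality then forces each second difference to vanish, yielding separability of $V$. This is exactly the point at which real-valuedness is indispensable: over $\C$ a sum of squares need not vanish termwise, which is why the complex case can only yield Floquet isospectrality of the lower-dimensional pieces (the second statement of the abstract) rather than genuine separability. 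The delicate part of the whole argument is to produce this positive invariant in a way that is uniform in the $q_l$ and valid for block decompositions of arbitrary shape.
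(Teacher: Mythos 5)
Your high-level architecture is the right one, and you correctly locate the crux: one must extract from $P_V\equiv P_Y$ a \emph{nonnegative} spectral quantity that vanishes exactly when the cross-block part of the potential vanishes. But that extraction is the entire content of the proof, and you explicitly defer it (``the delicate part of the whole argument is to produce this positive invariant\dots''), so as written there is a genuine gap. For the record, here is how the paper fills it. Using the Floquet-transformed form $\tilde{\mathcal{D}}_V=A+B_V$ of Lemma~\ref{lesep} (diagonal part $A$ universal, off-diagonal part $B_V(n;n')=\hat V(n-n')$), one expands $\det(A+B_V-\lambda I)$ and sorts terms by total degree in $(z,\lambda)$. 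The top-degree terms are universal; the degree-$(Q-1)$ terms give $[V]=[Y]$; and the degree-$(Q-2)$ terms give the rational identity \eqref{g55}, whose numerators are $B_V(n;n')B_V(n';n)=\hat V(n-n')\hat V(n'-n)=|\hat V(n-n')|^2\ge 0$ --- this is precisely where real-valuedness enters (for complex $V$ the product $\hat V(l)\hat V(-l)$ is not a modulus squared, which is a sharper statement of your ``sum of squares need not vanish termwise'' remark). Then one specializes: setting $z=0$ in \eqref{g55} yields $\sum_{l\in W}|\hat V(l)|^2=\sum_{l\in W}|\hat Y(l)|^2$, and setting the block-$j$ variables to zero and restricting to the plane $-\lambda+\sum_{m\notin \text{block }j} z_m=0$ isolates the block marginals $\sum_{l\in W_j}|\hat V(l^{j\neg})|^2=\sum_{l\in W_j}|\hat Y(l^{j\neg})|^2$ (Theorem~\ref{key4}). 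Subtracting the marginals from the total leaves $\sum_{l\in S}|\hat V(l)|^2=\sum_{l\in S}|\hat Y(l)|^2$ over the set $S$ of frequencies with at least two nonzero block components; by Lemma~\ref{leSeparable1} the right side is $0$, and nonnegativity forces every mixed $\hat V(l)$ to vanish. Without a concrete recipe of this kind --- which coefficients to read off and which specializations of $(z,\lambda)$ isolate the marginals --- your argument does not close.

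Two further points. First, your induction on the number of blocks is both unnecessary for this theorem and circular as stated: step (b) asks to ``reduce the residual data of $V$ to an $(r-1)$-block problem,'' but peeling a block off $V$ means writing $V=V_1\oplus V'$, which presupposes exactly the separability being proven. The paper treats all $r$ blocks at once via the single set $S$. Second, the tensor-sum/resultant form of $P_Y$ that you develop in detail is correct but is not what this theorem uses; the only input from the separability of $Y$ is the vanishing of its mixed Fourier coefficients (Lemma~\ref{leSeparable1}). The product formula is the relevant structure for Theorem~\ref{thmmain3}, where one does peel off blocks by comparing top-degree coefficients in the first block's variables along the slice \eqref{equ32}.
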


	\begin{theorem}\label{thmmain3}
		
		Assume  that  complex-valued   $\Gamma$-periodic functions  $V$ and $Y$ are  Floquet isospectral,   and  both  $V=\bigoplus_{j=1}^rV_j$ and $Y=\bigoplus_{j=1}^r  Y_j$ are separable   functions.  Then, up to a constant,  lower dimensional decompositions  $V_j$  and $Y_j$ are Floquet isospectral, $j=1,2,\cdots,r$.
	\end{theorem}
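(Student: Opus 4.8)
The plan is to encode Floquet isospectrality through the entire function $t\mapsto \operatorname{tr}e^{tD_V(k)}$ and to exploit that, for separable potentials, the Floquet matrix is a Kronecker (tensor) sum. Recall the reformulation noted above: $\Delta+V$ and $\Delta+Y$ are Floquet isospectral iff $\det(D_V(k)-\lambda I)\equiv\det(D_Y(k)-\lambda I)$, i.e.\ $D_V(k)$ and $D_Y(k)$ have the same spectrum, with multiplicity, for every $k$; equivalently $\operatorname{tr}e^{tD_V(k)}=\operatorname{tr}e^{tD_Y(k)}$ for all $t\in\C$ and all $k$. If $V=\bigoplus_{j=1}^rV_j$, then on the product fundamental domain the operator splits as a Kronecker sum $D_V(k)=\bigoplus_{j=1}^r D_{V_j}(k^{(j)})$, where $k^{(j)}$ is the block of Floquet phases attached to the $j$-th factor and $D_{V_j}$ is the Floquet matrix of the lower-dimensional operator $\Delta+V_j$ (an $n_j\times n_j$ matrix with $n_j$ the product of the periods in that block, the same for $V$ and $Y$). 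Since $e^{t(A\otimes I+I\otimes B)}=e^{tA}\otimes e^{tB}$, the exponential trace factorizes:
\[
\operatorname{tr}e^{tD_V(k)}=\prod_{j=1}^r\operatorname{tr}e^{tD_{V_j}(k^{(j)})}.
\]
Writing $F_j(k^{(j)},t)=\operatorname{tr}e^{tD_{V_j}(k^{(j)})}$ and $G_j$ for the analogous $Y$-quantity, the hypothesis becomes $\prod_j F_j=\prod_j G_j$, an identity between products of functions of the disjoint variable groups $k^{(1)},\dots,k^{(r)}$ and the common parameter $t$.

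Next I would separate variables. Each $F_j,G_j$ is holomorphic in $z^{(j)}=e^{2\pi i k^{(j)}}$ and entire in $t$, and $G_j(\cdot,0)=n_j\neq0$, so the $G_j$ are generically nonzero. Taking logarithms on the open dense set where all factors are nonzero and differentiating $\sum_j\log(F_j/G_j)=0$ in a single variable from the block $k^{(1)}$ kills every term except $j=1$; hence $F_1/G_1$ is independent of $k^{(1)}$, and since it manifestly does not involve the other blocks it is a function $f_1(t)$ of $t$ alone. Applying this to each block and extending by analyticity gives
\[
F_j(k^{(j)},t)=f_j(t)\,G_j(k^{(j)},t),\qquad j=1,\dots,r,
\]
with $\prod_{j=1}^r f_j(t)\equiv1$ and $f_j(0)=1$.

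The heart of the matter, and the step I expect to be the main obstacle, is to show that each $f_j$ is a single exponential, $f_j(t)=e^{c_jt}$. First one checks $f_j$ is entire and zero-free: since $F_j,G_j$ are entire of exponential type in $t$, a pole of $f_j$ at some $t_0$ would require $G_j(\cdot,t_0)$ to vanish to higher order than $F_j(\cdot,t_0)$ for every $k^{(j)}$ (as $f_j$ is independent of $k^{(j)}$), forcing $G_j(k^{(j)},t_0)=\sum_l e^{t_0\nu_l(k^{(j)})}\equiv0$ in $k^{(j)}$, which is impossible because $\sigma(D_{Y_j}(k^{(j)}))$ genuinely varies with $k^{(j)}$; by symmetry $f_j$ is zero-free, so $f_j=e^{h}$ with $h$ entire, $h(0)=0$. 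Fixing a generic $k^{(j)}$ makes $F_j,G_j$ exponential polynomials $\sum_i e^{t\mu_i}$ and $\sum_l e^{t\nu_l}$ of exponential type with identical zero sets (as $e^h$ is finite and nonzero), so a Borel--Carath\'eodory estimate bounds $\operatorname{Re}h(t)=\log|F_j|-\log|G_j|=O(|t|)$ uniformly; an entire function whose real part grows at most linearly is affine, whence $h(t)=c_jt$. Then $\sum_i e^{t\mu_i(k^{(j)})}=e^{c_jt}\sum_l e^{t\nu_l(k^{(j)})}=\sum_l e^{t(\nu_l(k^{(j)})+c_j)}$ for all $t$ forces the spectra to agree as multisets, i.e.\ $\sigma(D_{V_j}(k^{(j)}))=\sigma(D_{Y_j}(k^{(j)}))+c_j$ for every $k^{(j)}$, which is precisely Floquet isospectrality of $V_j$ and $Y_j$ up to the constant $c_j$, with $\sum_j c_j=0$ coming from $\prod_j f_j\equiv1$. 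The constant $c_j$ is $k$-independent because $c_j=n_j^{-1}\bigl(\operatorname{tr}D_{V_j}-\operatorname{tr}D_{Y_j}\bigr)$ is the $k$-independent difference of the averaged potentials. Note that nowhere does the argument use coprimality or equality of the $q_l$, so it covers arbitrary periods and arbitrary $r$; the entire difficulty is packaged into the complex-analytic step pinning $f_j$ down to a pure exponential, the Kronecker-sum factorization and the separation of variables being otherwise routine.
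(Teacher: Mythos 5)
Your route is genuinely different from the paper's. The paper reduces to $r=2$ by induction, normalizes $[V_j]=[Y_j]=0$ using $[V]=[Y]$, substitutes $\lambda=y+\sum_{j=1}^{d_1}(z_j+z_j^{-1})$ into $\tilde{\mathcal{P}}_V$ and $\tilde{\mathcal{P}}_Y$, and compares the coefficients of the top-degree monomials in $(z_1,\dots,z_{d_1})$; this directly extracts $\tilde{\mathcal{P}}_{V_2}\equiv\tilde{\mathcal{P}}_{Y_2}$ with no transcendental functions involved. Your Kronecker-sum factorization of $\operatorname{tr}e^{tD_V(k)}$ and the separation of variables are correct and give a clean reduction to the statement $F_j=f_j(t)G_j$ with $\prod_jf_j\equiv1$; the identification $c_j=n_j^{-1}(\operatorname{tr}B_{V_j}-\operatorname{tr}B_{Y_j})$ and the final passage from $f_j=e^{c_jt}$ to equality of spectra as multisets are also fine.

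However, the step you yourself flag as the heart of the matter has two genuine gaps. First, to show $f_j$ is pole-free you assert that $G_j(\,\cdot\,,t_0)=\operatorname{tr}e^{t_0D_{Y_j}(\cdot)}$ cannot vanish identically in $k^{(j)}$ because the spectrum ``genuinely varies''; that is not an argument. The potentials here are complex-valued and $t_0$ is complex, so no positivity is available, and already for $n_j=2$ one has $G_j=2e^{t_0\tau(k)/2}\cosh\bigl(t_0\delta(k)/2\bigr)$, whose identical vanishing is excluded only by proving that $\delta(k)$ is nonconstant and that $t_0\delta(k)/2$ cannot sit identically in $i\pi(\Z+\tfrac12)$; in general one needs an asymptotic analysis of the eigenvalues of $\tilde{\mathcal{D}}_{Y_j}(z)=z_1\Lambda+O(1)$ as $|z_1|\to\infty$ (note $\Lambda$ has repeated eigenvalues, so this is an induction, not a one-liner). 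Second, the growth estimate ``$\operatorname{Re}h(t)=\log|F_j|-\log|G_j|=O(|t|)$ uniformly'' is false as stated: $\log|G_j|$ is not bounded below by $-O(|t|)$ near the (infinitely many) zeros of $G_j$, so Borel--Carath\'eodory cannot be applied on arbitrary circles. This can be repaired --- either by a minimum-modulus (Cartan-type) estimate on a sequence of good circles, yielding $|h(t)|=O(|t|\log|t|)$ and hence $\deg h\le1$, or by invoking the classical Lindel\"of-type theorem that an entire quotient of two functions of exponential type is of exponential type, or by Ritt's quotient theorem for exponential sums --- but some such input must be named; as written the step does not go through. Both gaps are fixable, so the strategy is viable, but the proof is incomplete exactly where the difficulty is concentrated, whereas the paper's coefficient-comparison argument avoids these analytic issues entirely.
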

	
	Theorems \ref{thmmain2} and \ref{thmmain3} imply  
	\begin{theorem}\label{mainthm}
		Assume that  real-valued functions $V$ and $Y$ are Floquet isospectral, and $Y=\bigoplus _{j=1}^r Y_j$ is    separable. Then $V=\bigoplus _{j=1}^r V_j$ is   separable.   Moreover, up to a constant,
		$Y_j$ and 	$V_j$, $j=1,2,\cdots,r$,   are Floquet isospectral.
	\end{theorem}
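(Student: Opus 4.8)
The plan is to obtain Theorem~\ref{mainthm} as a straightforward two-step synthesis of the two preceding main theorems, so that no new analytic machinery is needed. First I would record the dimensions of the given decomposition: since $Y=\bigoplus_{j=1}^r Y_j$ with each $Y_j$ a function on $\Z^{d_j}$ and $\sum_{j=1}^r d_j=d$, the potential $Y$ is by definition $(d_1,d_2,\cdots,d_r)$ separable in the sense of \eqref{g61}.

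With this identification made, the first step is to produce the separability of $V$. Since $V$ and $Y$ are real-valued, Floquet isospectral, and $Y$ is $(d_1,d_2,\cdots,d_r)$ separable, Theorem~\ref{thmmain2} applies verbatim and yields that $V$ is also $(d_1,d_2,\cdots,d_r)$ separable. Hence there exist functions $V_j$ on $\Z^{d_j}$ with $V=\bigoplus_{j=1}^r V_j$, and crucially this decomposition is with respect to the \emph{same} partition of coordinates $\{1,\dots,d_1\},\{d_1+1,\dots,d_1+d_2\},\dots$ as that of $Y$, since the notion of $(d_1,\dots,d_r)$ separability fixes the grouping.

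The second step is to compare the pieces. At this point both $V=\bigoplus_{j=1}^r V_j$ and $Y=\bigoplus_{j=1}^r Y_j$ are separable with the identical decomposition type, and $V,Y$ remain Floquet isospectral. Viewing the real-valued potentials as a special case of complex-valued ones, Theorem~\ref{thmmain3} then applies and gives that, up to an additive constant, the lower-dimensional decompositions $V_j$ and $Y_j$ are Floquet isospectral for each $j=1,2,\cdots,r$. This is exactly the conclusion of Theorem~\ref{mainthm}.

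The argument is essentially bookkeeping, so there is no genuine obstacle at this level; the only point requiring a moment's care is the compatibility of the coordinate partitions across the two applications, which is automatic because Theorem~\ref{thmmain2} returns a decomposition of the prescribed type $(d_1,\dots,d_r)$ rather than some other grouping. All of the real difficulty is concentrated in Theorems~\ref{thmmain2} and~\ref{thmmain3} themselves, whose proofs rest on the reformulation of Floquet isospectrality via $\det(D_V(k)-\lambda I)$ and the algebraic structure of the Bloch variety.
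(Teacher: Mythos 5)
Your proposal is correct and is exactly the paper's argument: the paper states Theorem~\ref{mainthm} as an immediate consequence of Theorems~\ref{thmmain2} and~\ref{thmmain3}, applied in precisely the order you describe. Your added remark about the compatibility of the coordinate partitions is a reasonable point of care but introduces nothing beyond the paper's intended deduction.
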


	Our approach is definitely developed from \cite{liu2021fermi}.  However, the technical parts are  different.  
	In the following, we will   present the main ideas  of the proof  of Theorem  \ref{thmmain2}. For simplicity, let us take  $r=2$  as an example. A basic fact of discrete Fourier transform states that a function  $V$ is $(d_1,d_2)$ separable iff for any nontrivial (non-zero modulo periodicity) $l_1\in\Z^{d_1}$ and $l_2\in\Z^{d_2}$, $\hat{V}(l_1,l_2)=0$,
	where $\hat{V}$ is the discrete Fourier transform of $V$.  In \cite{liu2021fermi}, to prove the separability of functions,   the author  used a direct way to show   $\hat{V}(l_1,l_2)=0$.
	In  the present work, we use a detour. We first show that for Floquet isospectral functions $V$ and $Y$,  $\sum_{l_1,l_2} |\hat{V} (l_1,l_2)|^2= \sum_{l_1,l_2} |\hat{Y} (l_1,l_2)|^2$, $\sum_{l_1} |\hat{V} (l_1,0)|^2= \sum_{l_1} |\hat{Y} (l_1,0)|^2$ and $\sum_{l_2} |\hat{V} (0,l_2)|^2= \sum_{l_2} |\hat{Y} (0,l_2)|^2$.  Therefore, 
	$\hat{Y}(l_1,l_2)=0$ for all nontrivial (non-zero modulo periodicity) $l_1$ and $l_2$ implies that  for all nontrivial $l_1$ and $l_2$  $\hat{V}(l_1,l_2)=0$.

	The approach in this paper is general which can be applied to study the isospectrality problems of  periodic  operators on more  general lattices.  In Section \ref{Stri},  we discuss the generalization to the triangular lattice.
	

	The rest of this paper is organized as follows.  
	In Section \ref{S2}, we  recall some basics    about the discrete  periodic Schr\"odinger equations. 
	Sections \ref{S3} and  \ref{S4}   are  devoted to proving  Theorems \ref{thmmain2} and \ref{thmmain3}. 
	In  Section \ref{Stri}, we discuss  the Floquet isospectrality of periodic operators on the triangular lattice.

	\section{ Basics }\label{S2}
	
	In this section, we first recall some basic facts about the Fermi variety, see, e.g., \cite{liu1,ksurvey,liujmp22}.
	
	Let $W$ be  a fundamental domain   for $\Gamma=q_1\Z\oplus q_2\Z\oplus \cdots \oplus q_d\Z$:
	\begin{equation*}
	W=\{n=(n_1,n_2,\cdots,n_d)\in\Z^d: 0\leq n_j\leq q_{j}-1, j=1,2,\cdots, d\}.
	\end{equation*}
	By writing out  the equation \eqref{spect_0}  on the $Q=q_1q_2\cdots q_d$ dimensional space $\{u(n),n\in W\}$, the equation \eqref{spect_0} with the boundary condition \eqref{FI} 
	translates into the eigenvalue problem for a $Q\times Q$ matrix ${D}_V(k)$. 
	
	Let $\C^{\star}=\C\backslash \{0\}$ and $z=(z_1,z_2,\cdots,z_d)$. 
	Let $z_j=e^{2\pi i k_j}, j=1,2,\cdots, d$, $\mathcal{D}_V(z)=D_V(k)$ and $\mathcal{P}_V(z,\lambda)=\det (\mathcal{D}_V(z)-\lambda I)$.

	By the basic facts of linear algebra, one has
	\begin{proposition}\label{le1}
		Let $d\geq 1$.  Two complex-valued  $\Gamma$-periodic  functions $V$ and $Y$ are Floquet isospectral  iff  
		$\mathcal{P}_V(z,\lambda) \equiv \mathcal{P}_{Y}(z,\lambda)$.
	\end{proposition}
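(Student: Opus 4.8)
The plan is to reduce the proposition to the elementary fact that a monic univariate polynomial is completely determined by its multiset of roots, and then to upgrade an equality that a priori holds only on the real torus $\T^d=\{z:|z_j|=1,\ j=1,\dots,d\}$ to a genuine identity on all of $(\C^\star)^d$, exploiting that $\mathcal{P}_V(z,\lambda)$ is a Laurent polynomial in $z$.

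First I would fix $k\in\R^d$ and observe that, with $z_j=e^{2\pi i k_j}$, the quantity $\mathcal{P}_V(z,\lambda)=\det(\mathcal{D}_V(z)-\lambda I)$ is exactly the characteristic polynomial of the $Q\times Q$ matrix $D_V(k)$, regarded as a polynomial in $\lambda$. It is monic of degree $Q$, and its roots counted with multiplicity are precisely the algebraic eigenvalues comprising $\sigma(D_V(k))$. Consequently the equality of multisets $\sigma(D_V(k))=\sigma(D_Y(k))$ is equivalent to $\mathcal{P}_V(z,\lambda)=\mathcal{P}_Y(z,\lambda)$ for all $\lambda\in\C$, since two monic polynomials of the same degree coincide iff they share the same zeros with multiplicity. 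Letting $k$ range over $\R^d$, Floquet isospectrality is therefore equivalent to $\mathcal{P}_V(z,\lambda)=\mathcal{P}_Y(z,\lambda)$ for every $\lambda\in\C$ and every $z\in\T^d$. The converse implication in the proposition is then immediate, as an identity on $(\C^\star)^d$ restricts to an equality on $\T^d$.

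The remaining task is to pass from equality on $\T^d$ to the identity $\mathcal{P}_V\equiv\mathcal{P}_Y$ on $(\C^\star)^d$. Because the Floquet--Bloch boundary condition \eqref{FI} converts each lattice shift appearing in $\Delta$ into multiplication by $z_j^{\pm1}$, every entry of $\mathcal{D}_V(z)$ is a Laurent polynomial in $z_1,\dots,z_d$; hence so is the determinant, and for each fixed $\lambda$ the difference $g(z,\lambda):=\mathcal{P}_V(z,\lambda)-\mathcal{P}_Y(z,\lambda)$ is a Laurent polynomial in $z$ that vanishes at every $z\in\T^d$. The key point is that $\T^d$ is a uniqueness set for Laurent polynomials: writing $g(\cdot,\lambda)=\sum_m c_m(\lambda)\,z^m$ and integrating against the characters $z^{-n}$ over $\T^d$, the orthogonality of exponentials forces each coefficient $c_m(\lambda)$ to vanish, so $g(\cdot,\lambda)\equiv 0$ as a Laurent polynomial and therefore on all of $(\C^\star)^d$. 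Since this holds for every $\lambda$, we obtain $\mathcal{P}_V\equiv\mathcal{P}_Y$, as desired.

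I expect no serious obstacle here: the content is genuinely a basic fact of linear algebra combined with one standard analytic observation. The only step that requires a little attention is the final uniqueness argument, for which one must know that the entries of $\mathcal{D}_V(z)$ are truly Laurent polynomials in $z$ rather than merely continuous functions of it — precisely what the explicit realization of $D_V(k)$ from \eqref{spect_0}--\eqref{FI} supplies. With that in hand the Fourier-coefficient computation closes the proof; equivalently, one may iterate the one-variable identity theorem in each $z_j$ separately, using that the unit circle is a uniqueness set in $\C^\star$.
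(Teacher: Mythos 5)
Your proposal is correct and is essentially the argument the paper intends: the paper offers no written proof, dismissing the statement with ``by the basic facts of linear algebra,'' and your write-up simply makes explicit the two ingredients involved --- that equality of eigenvalue multisets for a $Q\times Q$ matrix is equivalent to equality of characteristic polynomials, and that a Laurent polynomial in $z$ vanishing on all of $\T^d$ vanishes identically on $(\C^\star)^d$. The only cosmetic quibble is that $\det(D_V(k)-\lambda I)$ is monic in $\lambda$ only up to the sign $(-1)^Q$, which of course does not affect the argument.
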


	Define the discrete Fourier transform $\hat{V}(l) $ for $l\in {W}$ by 
	\begin{equation*}
	\hat{V}(l) =\frac{1}{{Q}}\sum_{ n\in {W} } V(n) \exp\left\{-2\pi i \left(\sum_{j=1}^d \frac{l_j n_j}{q_j} \right)\right\}.
	\end{equation*}
	For convenience, we extend $\hat{V}(l)$ to $ \Z^d$    periodically, namely, for any $l\equiv m\mod \Gamma$,
	\begin{equation*}
	\hat{V}(l)=\hat{V}(m).
	\end{equation*}

	Define
	\begin{equation}\label{gtm}
	\tilde{\mathcal{D}}_V(z)= \tilde{\mathcal{D}}_V(z_1,z_2,\cdots,z_d)= \mathcal{D}_V(z_1^{q_1},z_2^{q_2},\cdots,z_d^{q_d}),
	\end{equation}
	and 
	\begin{equation}\label{gtp}
	\tilde{\mathcal{P}}_V(z,\lambda)=\det( \tilde{\mathcal{D}}_V(z,\lambda)-\lambda I)= \mathcal{P}_V(z_1^{q_1},z_2^{q_2},\cdots,z_d^{q_d},\lambda).
	\end{equation}
	Let $$\rho^j_{n_j}=e^{2\pi  \frac{n_j}{q_j} i},$$
	where $0\leq n_j \leq q_j-1$, $j=1,2,\cdots,d$.
	
	
	A straightforward application of the  discrete Floquet transform (e.g.,  \cite{liu1,ksurvey,flm22}) leads to
	\begin{lemma}\label{lesep} 
		Let $n=(n_1,n_2,\cdots,n_d) \in {W}$ and $n^\prime=(n_1^\prime,n_2^\prime,\cdots,n_d^\prime) \in {W}$. Then 
		$\tilde{\mathcal{D}}_V(z)$ is unitarily equivalent to 		
		$
		A+B_V,
		$
		where $A$ is a diagonal matrix with entries
		\begin{equation}\label{A}
		A(n;n^\prime)=\left(\sum_{j=1}^d \left(\rho^j_{n_j}z_j+\frac{1}{\rho^j_{n_j} z_j} \right)\right) \delta_{n,n^{\prime}}
		\end{equation}
		and \begin{equation}\label{gb}
		B_V(n;n^\prime)=\hat{V} \left(n_1-n_1^\prime,n_2-n_2^\prime,\cdots, n_d-n_d^\prime\right).
		\end{equation}
		In particular,
		\begin{equation*}
		\tilde{\mathcal{P}}_V(z, \lambda) =\det(A+B_V-\lambda I).
		\end{equation*}

	\end{lemma}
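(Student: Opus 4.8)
The plan is to realize $\tilde{\mathcal D}_V(z)$ as a concrete $Q\times Q$ matrix on $\C^W$, split it into a hopping (Laplacian) part and a potential part, and then apply the discrete Floquet transform, which factors into a diagonal gauge change followed by a coordinatewise discrete Fourier transform $U$. Under this transform the hopping part becomes the diagonal matrix $A$ of \eqref{A}, while the potential part becomes the convolution matrix $B_V$ of \eqref{gb}.

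First I would write $\mathcal D_V(z) = L(z) + M_V$, where $M_V = \mathrm{diag}\big(V(n)\big)_{n\in W}$ is multiplication by the potential and $L(z) = \sum_{j=1}^d \big(\hat C_j + \hat C_j^{-1}\big)$ is the Laplacian part in its adjacency form (matching the absence of a diagonal constant in $A$); here $\hat C_j$ is the cyclic shift in the $j$-th coordinate which, on wrapping around $W$, picks up the Floquet--Bloch phase $z_j$ from \eqref{FI}. The substitution $z_j\mapsto z_j^{q_j}$ of \eqref{gtm} replaces that boundary phase by $z_j^{q_j}$. I would then remove the twist by the diagonal gauge change $w(n) = \big(\prod_{j} z_j^{-n_j}\big)u(n)$, which turns $\Gamma$-quasiperiodic $u$ into genuinely $\Gamma$-periodic $w$ and conjugates $\tilde{\mathcal D}_V(z)$ into $\sum_{j}\big(z_j C_j + z_j^{-1} C_j^{-1}\big) + M_V$ with the plain cyclic shifts $C_j$; this diagonal conjugation is unitary on the torus $|z_j|=1$, and in any case preserves the characteristic polynomial.

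Next I would diagonalize the hopping part. The plain cyclic shifts $C_1,\dots,C_d$ act in distinct directions, hence commute, and each is a genuine $q_j$-cycle, so the $z$-independent unitary $U$ given by the tensor product of one-dimensional discrete Fourier transforms diagonalizes them simultaneously, with $C_j$ acting by the eigenvalue $\rho^j_{n_j}$ on the mode labelled by $n$. Consequently $z_j C_j + z_j^{-1}C_j^{-1}$ contributes $\rho^j_{n_j}z_j + (\rho^j_{n_j}z_j)^{-1}$, and summing over $j$ yields exactly the diagonal matrix $A$.

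The step that needs the most care is checking that the same $U$ sends $M_V$ to $B_V$. This is the classical fact that conjugating a multiplication operator by a Fourier transform produces a circulant/Toeplitz matrix whose entries are the Fourier coefficients of the symbol: $(U^* M_V U)(n;n')$ collapses, after summing the products of roots of unity $\rho^j_{\cdot}$ via their orthogonality relations in each coordinate, to $\hat V(n_1-n_1',\dots,n_d-n_d')$. The only real obstacle is the bookkeeping --- matching the sign of the exponent and the $1/Q$ normalization to the definition of $\hat V$, and observing that the $\Gamma$-periodicity of $V$ is precisely what forces the transformed entries to depend only on the difference $n-n'$. Combining the two computations gives $U^*\big(\sum_j(z_jC_j+z_j^{-1}C_j^{-1})+M_V\big)U = A + B_V$, so on the torus $\tilde{\mathcal D}_V(z)$ is unitarily equivalent to $A+B_V$; taking determinants (invariant under similarity) then yields $\tilde{\mathcal P}_V(z,\lambda) = \det(A+B_V-\lambda I)$ for all $z$, as claimed.
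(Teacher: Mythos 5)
Your proof is correct and is exactly the ``straightforward application of the discrete Floquet transform'' that the paper invokes without writing out (it only cites \cite{liu1,ksurvey,flm22}): gauge away the boundary phase $z_j^{q_j}$ to get $z_jC_j$ with plain cyclic shifts, diagonalize the shifts by the tensor-product DFT to obtain $A$, and observe that the same DFT turns multiplication by $V$ into the convolution matrix $B_V$. Your remark that the diagonal gauge change is unitary only for $|z_j|=1$ but in general still a similarity (so the determinant identity holds for all $z$) is the right way to read the lemma's ``unitarily equivalent.''
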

	
	\section{Proof of Theorem \ref{thmmain2}}\label{S3}
	
	Assume $\sum_{j=1}^r d_j=d$ with $r\geq 2$ and $d_j\geq 1$, $j=1,2,\cdots, r$. For convenience, let $d_0=0$.
	For any $l=(l_1,l_2,\cdots, l_d)\in\Z^d$ and $j=1,2,\cdots,r$, denote by 
	\begin{equation}\label{g67}
	\tilde{l}_j=(l_{1+\sum_{m=0}^{j-1} d_m},l_{2+\sum_{m=0}^{j-1} d_m},\cdots,l_{{d_j}+\sum_{m=0}^{j-1} d_m})\in\Z^{d_j}.
	\end{equation}
	
	For $j=1,2,\cdots, r$, denote by 
	\begin{equation*}
	{W}_j= \{n\in\Z^{d_j}:  0\leq n_b\leq q_{b+\sum_{m=0}^{j-1} d_m}-1, b=1,2,\cdots, d_j\}.
	\end{equation*}
	For any $n\in\Z^{d_j}$,  $j=1,2,\cdots, r$,  denote by
	\begin{equation}\label{g67new}
	n^{j\neg}=(\overbrace {0,0,\cdots,0}^ {a_j},n_1,n_2,\cdots,n_{{d_j}},\overbrace {0,0,\cdots,0}^{b_j})\in\Z^d, 
	\end{equation}
	where $a_j=\sum_{m=0}^{j-1} d_m$ for $j=1,2,\cdots,r$ and $b_j=\sum_{m=j+1}^r d_m$  for $j=1,2,\cdots, r-1$(set $b
	_r=0$).
	\begin{lemma}\label{leSeparable1}(e.g., \cite{liu2021fermi})
		A function 
		$V$ is $(d_1,d_2,\cdots,d_r)$ separable    if and only if     for any $l\in W$    with  at least two non-zero  
		$ \tilde{l}_{j},$ $j=1,2,\cdots, r$,  $
		\hat{V}(l)=0.
		$
	\end{lemma}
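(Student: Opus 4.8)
The plan is to reduce the statement to the orthogonality relations for the characters of the finite abelian group $\Z/q_1\Z\oplus\cdots\oplus\Z/q_d\Z$, exploiting that the fundamental domain factors as $W=W_1\times W_2\times\cdots\times W_r$ under the grouping of coordinates into the $r$ blocks $\tilde l_j$ of \eqref{g67}. Both directions then fall out of the definition of $\hat V$ together with Fourier inversion, once one tracks which frequencies $l$ can carry nonzero mass.

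For the forward implication I would start from $V=\bigoplus_{j=1}^r V_j$, so that $V(n)=\sum_{j=1}^r V_j(\tilde n_j)$ with $\tilde n_j$ the block-$j$ part of $n$, and use linearity of the transform to write $\hat V(l)=\sum_{j=1}^r \widehat{V_j\circ\pi_j}(l)$, where $V_j\circ\pi_j$ is the extension of $V_j$ to $\Z^d$ ignoring all coordinates outside block $j$. In the defining sum for $\widehat{V_j\circ\pi_j}(l)$ the summation over each coordinate $n_b$ with $b$ outside block $j$ factors out and contributes $\sum_{n_b=0}^{q_b-1}\exp(-2\pi i\, l_b n_b/q_b)$, which equals $q_b$ when $l_b\equiv 0\bmod q_b$ and vanishes otherwise. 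Hence for $l\in W$ the term $\widehat{V_j\circ\pi_j}(l)$ is zero unless $\tilde l_m=0$ for every $m\neq j$. If $l$ has at least two nonzero blocks, then for every $j$ there is a nonzero block outside block $j$, so every summand vanishes and $\hat V(l)=0$, which is the asserted vanishing.

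For the converse I would invoke Fourier inversion, $V(n)=\sum_{l\in W}\hat V(l)\exp\big(2\pi i\sum_b l_b n_b/q_b\big)$, and split the sum according to the number of nonzero blocks of $l$. By hypothesis every $l$ with two or more nonzero blocks contributes nothing, leaving $V(n)=\hat V(0)+\sum_{j=1}^r S_j(n)$, where $S_j(n)=\sum \hat V(l)\exp\big(2\pi i\sum_b l_b n_b/q_b\big)$ is taken over those $l$ with $\tilde l_j\neq 0$ and $\tilde l_m=0$ for $m\neq j$. In each $S_j$ the coordinates $n_b$ outside block $j$ multiply $l_b=0$ in the exponential and so drop out; thus $S_j$ depends only on $\tilde n_j$. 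Setting $V_j:=S_j$ for $j\geq 2$ and $V_1:=\hat V(0)+S_1$ exhibits $V$ in the form \eqref{g61}, i.e.\ $V$ is $(d_1,\dots,d_r)$ separable.

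The computation is essentially routine; the only points requiring care are the multi-index bookkeeping of \eqref{g67}–\eqref{g67new} and the convention that $\hat V$ is extended $\Gamma$-periodically, so ``nonzero $\tilde l_j$'' is understood modulo the $q_{\cdot}$-periodicity (harmless here since $l\in W$). The main, and rather mild, obstacle is to keep the indexing of the $r$ blocks consistent with the coordinate ordering implicit in \eqref{g61} and to check that the constant $\hat V(0)$ can be freely absorbed into one summand so that the decomposition genuinely has $r\geq 2$ components.
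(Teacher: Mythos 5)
Your proof is correct. The paper itself offers no proof of this lemma---it is simply quoted from \cite{liu2021fermi}---and your argument (character orthogonality over the blocks for the forward direction, Fourier inversion with the surviving single-block frequencies regrouped into the $V_j$ for the converse) is exactly the standard discrete-Fourier computation the paper has in mind, so there is nothing to add.
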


	Denote by $[V]$ the average of $V$:
	\begin{equation*}
	[V]=\frac{1}{Q}\sum_{n\in W}V(n).
	\end{equation*}
	
	\begin{lemma}\label{key1}
		Assume that  real-valued  $\Gamma$-periodic functions  $V$ and $Y$ are  Floquet isospectral. Then
		\begin{equation}\label{g21}
		[V]=[Y]
		\end{equation}
		and  
		\begin{equation}\label{g55}
		\sum_{ n\in W\atop {n^\prime \in W}}
		\frac{|\hat{V}(n-n^\prime)|^2}{(-\lambda+\sum_{j=1}^d \rho^j_{n_j}z_j) (-\lambda+\sum_{j=1}^d \rho^j_{n^\prime_j}z_j) } \equiv 	\sum_{ n\in W\atop {n^\prime \in W}}
		\frac{|\hat{Y}(n-n^\prime)|^2}{(-\lambda+\sum_{j=1}^d \rho^j_{n_j}z_j) (-\lambda+\sum_{j=1}^d \rho^j_{n^\prime_j}z_j) }.
		\end{equation}
	\end{lemma}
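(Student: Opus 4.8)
The plan is to reduce the statement to a single identity between two determinants, and then to read off \eqref{g21} and \eqref{g55} as the first two coefficients of a suitable asymptotic expansion. By Proposition \ref{le1}, Floquet isospectrality of $V$ and $Y$ is equivalent to $\mathcal{P}_V(z,\lambda)\equiv\mathcal{P}_Y(z,\lambda)$, and substituting $z_j\mapsto z_j^{q_j}$ as in \eqref{gtp} gives $\tilde{\mathcal{P}}_V(z,\lambda)\equiv\tilde{\mathcal{P}}_Y(z,\lambda)$. By Lemma \ref{lesep}, $\tilde{\mathcal{P}}_V(z,\lambda)=\det(A+B_V-\lambda I)$, where $A$ is the diagonal matrix \eqref{A} with entries $a_n=\sum_{j=1}^d\bigl(\rho^j_{n_j}z_j+(\rho^j_{n_j}z_j)^{-1}\bigr)$ and $B_V$ is given by \eqref{gb}; in particular the diagonal of $B_V$ is $\hat V(0)=[V]$ while its off-diagonal entries are $\hat V(n-n')$. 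Thus the whole argument rests on analyzing the identity $\det(A+B_V-\lambda I)\equiv\det(A+B_Y-\lambda I)$.

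The key device is a scaling. I would set $z_j=s w_j$ and $\lambda=s\mu$ and study both determinants as $s\to\infty$. Under this substitution the diagonal entry $a_n-\lambda$ becomes $s\Lambda_n+s^{-1}\beta_n$, where $\Lambda_n=-\mu+\sum_{j=1}^d\rho^j_{n_j}w_j$ is precisely the factor appearing in the denominators of \eqref{g55} and $\beta_n=\sum_{j=1}^d(\rho^j_{n_j}w_j)^{-1}$. Writing $\Lambda=\mathrm{diag}(\Lambda_n)$ and $E=\mathrm{diag}(\beta_n)$ and factoring out the (potential-independent) diagonal part $s\Lambda$ gives
\begin{equation*}
\det(A+B_V-\lambda I)=s^Q\Bigl(\prod_{n\in W}\Lambda_n\Bigr)\det\bigl(I+s^{-1}\Lambda^{-1}B_V+s^{-2}\Lambda^{-1}E\bigr).
\end{equation*}
Since the prefactor $s^Q\prod_{n}\Lambda_n$ and the matrix $E$ do not depend on the potential, dividing the isospectrality identity by this common prefactor reduces everything to $\det(I+s^{-1}\Lambda^{-1}B_V+s^{-2}\Lambda^{-1}E)\equiv\det(I+s^{-1}\Lambda^{-1}B_Y+s^{-2}\Lambda^{-1}E)$, valid for all large $s$.

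Next I would take $\log\det=\operatorname{tr}\log$ of both sides, which is legitimate for large $s$ since the argument tends to $I$, and expand in powers of $s^{-1}$. With $G_V=\Lambda^{-1}B_V$ and $H=\Lambda^{-1}E$ the expansion equals $s^{-1}\operatorname{tr}(G_V)+s^{-2}\bigl(\operatorname{tr}(H)-\tfrac12\operatorname{tr}(G_V^2)\bigr)+O(s^{-3})$. Matching the coefficient of $s^{-1}$ as a function of $(w,\mu)$ gives $\operatorname{tr}(\Lambda^{-1}B_V)=\operatorname{tr}(\Lambda^{-1}B_Y)$, i.e. $[V]\sum_n\Lambda_n^{-1}=[Y]\sum_n\Lambda_n^{-1}$; since $\sum_n\Lambda_n^{-1}\not\equiv0$ this yields \eqref{g21}. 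Matching the coefficient of $s^{-2}$, the common term $\operatorname{tr}(H)$ cancels and leaves $\operatorname{tr}((\Lambda^{-1}B_V)^2)=\operatorname{tr}((\Lambda^{-1}B_Y)^2)$, that is
\begin{equation*}
\sum_{n\in W,\,n'\in W}\frac{\hat V(n-n')\,\hat V(n'-n)}{\Lambda_n\Lambda_{n'}}\equiv\sum_{n\in W,\,n'\in W}\frac{\hat Y(n-n')\,\hat Y(n'-n)}{\Lambda_n\Lambda_{n'}}.
\end{equation*}
Because $V$ and $Y$ are real-valued we have $\hat V(n'-n)=\overline{\hat V(n-n')}$ and likewise for $Y$, so the products become $|\hat V(n-n')|^2$ and $|\hat Y(n-n')|^2$; renaming $(w,\mu)$ back to $(z,\lambda)$ gives exactly \eqref{g55}.

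I expect the main obstacle to be the bookkeeping that justifies matching coefficients. One must verify that the common prefactor $s^Q\prod_n\Lambda_n$ and the sum $\sum_n\Lambda_n^{-1}$ are not identically zero, so that the divisions and conclusions are valid, and one must check that no cross terms involving $H$ contaminate the $s^{-2}$ coefficient; a direct inspection of $\operatorname{tr}\log(I+s^{-1}G_V+s^{-2}H)$ shows that all such cross terms, as well as the cubic term $\tfrac13\operatorname{tr}(G_V^3)$, first enter at order $s^{-3}$. The reality hypothesis is also essential precisely here: it is what converts the products $\hat V(n-n')\hat V(n'-n)$ into the nonnegative quantities $|\hat V(n-n')|^2$, the form of \eqref{g55} that is needed for the later $\ell^2$-comparison leading to separability.
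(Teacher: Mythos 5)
Your proposal is correct and follows essentially the same route as the paper: both arguments reduce to $\tilde{\mathcal{P}}_V\equiv\tilde{\mathcal{P}}_Y$ via Proposition \ref{le1} and Lemma \ref{lesep} and then match the two top homogeneous components (total degree $Q-1$ and $Q-2$ in $(z,\lambda)$, counting $1/z_j$ as degree $-1$) of the characteristic polynomial, which is exactly what your scaling $z=sw$, $\lambda=s\mu$ isolates. The only real difference is the computational packaging: you extract those components via $\operatorname{tr}\log\bigl(I+s^{-1}\Lambda^{-1}B_V+s^{-2}\Lambda^{-1}E\bigr)$, whereas the paper expands the determinant directly into products of diagonal entries; both yield $[V]\sum_{n}\Lambda_n^{-1}$ at the first order and $-\tfrac12\sum_{n,n'}|\hat V(n-n')|^2/(\Lambda_n\Lambda_{n'})$ at the second, with the same use of reality to turn $\hat V(n-n')\hat V(n'-n)$ into $|\hat V(n-n')|^2$.
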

	The proof of Lemma \ref{key1} is similar to that of Theorem 4.1~\cite{liu2021fermi}. We include a proof in the appendix for the readers' convenience.
	\begin{theorem}\label{key4}
		Assume that   real-valued  $\Gamma$-periodic functions $V$ and $ Y$ are  Floquet isospectral.    Then we have that 
		\begin{align}
		\sum_ { l\in W}| \hat{V}(l)|^2 
		=	\sum_{ l\in W} |\hat{Y}(l)|^2, \label{g4111}
		\end{align}
		and for any $j=1,2,\cdots,r$,
		\begin{align}
		\sum_{{l}\in W_j}| \hat{V}(l^{j\neg})|^2 =	\sum_{{l}\in W_j}| \hat{Y}(l^{j\neg})|^2 . \label{g411}
		\end{align}
	\end{theorem}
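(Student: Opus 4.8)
The plan is to read off both identities \eqref{g4111} and \eqref{g411} directly from the rational‑function identity \eqref{g55} of Lemma~\ref{key1}. Abbreviate $s_n=s_n(z)=\sum_{i=1}^{d}\rho^{i}_{n_i}z_i$ and
\[
 f_V(z,\lambda)=\sum_{n,n'\in W}\frac{|\hat V(n-n')|^2}{(-\lambda+s_n)(-\lambda+s_{n'})},
\]
so that \eqref{g55} reads $f_V\equiv f_Y$ as an identity of rational functions in $(z,\lambda)$. For the $j$-th block I write $I_j=\{a_j+1,\dots,a_j+d_j\}$, split $s_n=s_n^{[j]}+s_n^{\neg j}$ with $s_n^{[j]}=\sum_{i\in I_j}\rho^{i}_{n_i}z_i$ and $s_n^{\neg j}=\sum_{i\notin I_j}\rho^{i}_{n_i}z_i$, and note that $s_n^{[j]}$ depends only on the block-$j$ part $\tilde n_j$ of $n$.

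For the global identity \eqref{g4111} I would multiply by $\lambda^2$ and let $\lambda\to\infty$ with $z$ fixed. Since $(-\lambda+s_n)(-\lambda+s_{n'})\sim\lambda^2$, the limit $\lim_{\lambda\to\infty}\lambda^2 f_V=\sum_{n,n'\in W}|\hat V(n-n')|^2$ is $z$-independent, and likewise for $Y$; equating the two limits gives $\sum_{n,n'}|\hat V(n-n')|^2=\sum_{n,n'}|\hat Y(n-n')|^2$. Because $\hat V$ is $\Gamma$-periodic, for each class $m$ the congruence $n-n'\equiv m$ has exactly $Q$ solutions $(n,n')\in W\times W$, whence $\sum_{n,n'}|\hat V(n-n')|^2=Q\sum_{l\in W}|\hat V(l)|^2$, and \eqref{g4111} follows.

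The block identities \eqref{g411} are the crux, since they involve only the \emph{pure} block-$j$ modes $\hat V(l^{j\neg})$, i.e.\ indices supported in $I_j$. The naive devices — setting the off-block variables $z_i=0$, or averaging them over the torus — only reproduce the \emph{marginal} sums obtained by summing $|\hat V|^2$ over the off-block frequencies, not the single slice that \eqref{g411} demands. To isolate that slice I would fix the shift $c_0=\sum_{i\notin I_j}z_i$, substitute $\lambda=\mu+c_0$ in $f_V\equiv f_Y$, and let the off-block variables $z_i\to\infty$ along a generic ray. After this substitution a term has denominator factor $-\mu+s_n^{[j]}+(s_n^{\neg j}-c_0)$, and $s_n^{\neg j}-c_0=\sum_{i\notin I_j}(\rho^i_{n_i}-1)z_i\to\infty$ unless all off-block coordinates of $n$ vanish; hence a term survives only when both $n$ and $n'$ have all off-block coordinates equal to $0$, in which case $n-n'=(\tilde n-\tilde n')^{j\neg}$. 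Thus the limit is exactly the $d_j$-dimensional function
\[
 g_V^{(j)}(z,\mu)=\sum_{\tilde n,\tilde n'\in W_j}\frac{|\hat V((\tilde n-\tilde n')^{j\neg})|^2}{(-\mu+s^{[j]}_{\tilde n})(-\mu+s^{[j]}_{\tilde n'})},\qquad s^{[j]}_{\tilde n}=\sum_{b=1}^{d_j}\rho^{a_j+b}_{\tilde n_b}z_{a_j+b},
\]
so that $g_V^{(j)}\equiv g_Y^{(j)}$. Applying to $g_V^{(j)}$ the same $\mu\to\infty$ extraction as above gives $\sum_{\tilde n,\tilde n'\in W_j}|\hat V((\tilde n-\tilde n')^{j\neg})|^2=\sum_{\tilde n,\tilde n'\in W_j}|\hat Y((\tilde n-\tilde n')^{j\neg})|^2$, and counting each class $|W_j|$ times yields \eqref{g411}.

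The main obstacle is exactly the separation of the pure slice from the marginal: crude specializations fail because they average over the off-block directions, so one must instead combine a shift of $\lambda$ (pinning one block of poles at finite height) with the limit $z_i\to\infty$ off block $j$, which surgically retains only the indices supported in $I_j$. Two technical points need care. First, the ray to infinity must be chosen generically, so that $s_n^{\neg j}-c_0\to\infty$ for \emph{every} $n$ with a nonzero off-block coordinate; this is a finite family of nonvanishing linear forms, so a generic ray suffices. Second, the substitution and the term-by-term limit are legitimate precisely because \eqref{g55} is an identity of rational functions valid for all $(z,\lambda)$, hence it persists under the affine substitution $\lambda=\mu+c_0$ and may be passed to the limit.
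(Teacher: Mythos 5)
Your proposal is correct and follows essentially the same route as the paper: both identities are read off from the rational identity \eqref{g55} of Lemma \ref{key1}, with \eqref{g4111} obtained by reducing every denominator to $\lambda^2$ (the paper sets $z=0$, you equivalently extract the $\lambda^{-2}$ coefficient as $\lambda\to\infty$), and \eqref{g411} obtained by isolating the terms with $n,n'$ supported in block $j$ using the pairwise distinctness of the linear forms $-\lambda+\sum_i\rho^i_{n_i}z_i$ in the off-block variables. The only cosmetic difference is in how that slice is extracted: the paper sets the in-block variables to zero and reads off the double-pole coefficient along the plane $-\lambda+\sum_{i\notin I_j}z_i=0$, whereas you shift $\lambda=\mu+c_0$ and send the off-block variables to infinity along a generic ray -- two implementations of the same residue computation, both valid.
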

	
	\begin{proof}
		Letting $z=0$ in \eqref{g55}, one has \eqref{g4111}.
		Without loss of generality, we only prove \eqref{g411} for $j=1$.
		Rewrite  \eqref{g55} as
		\begin{align}
		\sum_{ \substack {n\in W, {l}\in W}}&
		\frac{|\hat{V}({l})|^2}{(-\lambda+\sum_{j=1}^d\rho^j_{n_j}z_j) (-\lambda+\sum_{j=1}^d \rho^j_{n_j+{l}_j}z_j) }\nonumber\\
		& \equiv 	\sum_{ \substack {n\in W, {l} \in W}} \frac{|\hat{Y}({l})|^2}{(-\lambda+\sum_{j=1}^d\rho^j_{n_j}z_j) (-\lambda+\sum_{j=1}^d \rho^j_{n_j+{l}_j}z_j) }.\label{g291}
		\end{align}
		Let $z_1=z_2=\cdots=z_{d_1}=0$ and consider the plane $-\lambda+\sum_{j=d_1+1}^d z_j=0$. 
		Since  for any $(n_{d_1+1},n_{d_1+2},\cdots, n_d)$  with $(n_{d_1+1},n_{d_1+2},\cdots, n_d) \neq 0$ (modulo periodicity),   planes $-\lambda+\sum_{j=d_1+1}^d z_j=0$ and $-\lambda+\sum_{j=d_1+1}^d \rho_{n_j}^jz_j=0$  are not parallel.
		This implies that the leading term   in the sum of \eqref{g291}  (double zeros in the denominators) consists  of  $\frac{1}{(-\lambda+\sum_{j=d_1+1}^{d}\rho^j_{n_j}z_j) (-\lambda+\sum_{j=d_1+1}^{d} \rho^j_{n_j+{l}_j}z_j)} $ with $n_{d_1+1}=n_{d_1+2}=\cdots=n_{d}=0$ and ${l}_{d_1+1}={l}_{d_1+2}=\cdots =l_{d} =0$. Therefore, one has
		\begin{equation}\label{g42}
		\sum_{ \substack { l\in W\\ l_m=0,d_1+1\leq m\leq d}}| \hat{V}(l)|^2 
		=\sum_{ \substack { l\in W\\ l_m=0,d_1+1\leq m\leq d}}| \hat{Y}(l)|^2 .
		\end{equation}
		This implies \eqref{g411}.
	\end{proof}

	\begin{proof}[\bf Proof of Theorem \ref{thmmain2}]
		Let $S\subset W$ 	consist  of all $l$ with   at least two of 	$ \tilde{l}_{j},$ $j=1,2,\cdots, r$  non-zero. 
		
		By Lemma \ref{leSeparable1}, one has 
		\begin{equation}\label{gn3}
		\sum_{l\in S}|\hat{Y}(l)|^2 =0.
		\end{equation}
		By Theorem \ref{key4} and  \eqref{gn3}, one has 
		\begin{equation}\label{gn4}
		\sum_{l\in S}|\hat{V}(l) |^2=0.
		\end{equation}
		By Lemma \ref{leSeparable1} again,  $V$ is separable.
		
	\end{proof}
	\section{Proof of Theorem  \ref{thmmain3} } \label{S4}
	

	\begin{proof}[\bf Proof of Theorem \ref{thmmain3}]
		By  induction,  it suffices to prove the case $r=2$.
		Recall \eqref{g21}.
		Without loss of generality, assume $[V_1]=[V_2]=[Y_1]=[Y_2]=0$ and thus  $[V]=[Y]=0$.
		Fix any $y$. Let 
		\begin{equation}\label{equ32}
		\lambda=\lambda({z}_1,z_2,\cdots, z_{d_1}, y ) =y+	\sum_{j=1}^{d_1}\left(z_j+\frac{1}{   z_j}\right).
		\end{equation}
		
		Then for any   $n\in W_1\backslash \{(0,0,\cdots, 0)\}$, 
		\begin{equation}\label{equ29}
		-\lambda+	\sum_{j=1}^{d_1}	\left(\rho_{n_j}^j z_j+\frac{1}{ \rho_{n_j}^j z_j} \right) =-y+\sum_{j=1}^{d_1}( - 1+ \rho_{-n_j}^j) \frac{1}{z_j }+\sum_{j=1}^{d_1}( - 1+ \rho_{n_j}^j) z_j.
		\end{equation}
		Comparing the coefficients of  highest degree terms (the highest degree is   $Q-|W_2|$) of $(z_1,z_2,\cdots,z_{d_1})$  in both  
		$ \tilde{\mathcal{P}}_V(z,\lambda)$  and $\tilde{\mathcal{P}}_{Y}(z,\lambda)$ with $\lambda$ given by \eqref{equ32}, one has that 
		\begin{equation*}
		\tilde{ \mathcal{P}}_{Y_2}({z}_{d_1+1},z_{d_1+2},\cdots, z_{d}, y)\equiv 	\tilde{ \mathcal{P}}_{V_2}({z}_{d_1+1},z_{d_1+2},\cdots, z_{d},y).
		\end{equation*}
		This implies 
		$ V_2$ and $Y_2$ are Floquet isospectral. Interchanging, $V_1$ and $V_2$, $Y_1$ and $Y_2$, we have 	$ V_1$ and $Y_1$ are Floquet isospectral. We complete the proof.

	\end{proof}
	\section{Floquet isospectrality for periodic Schr\"odinger operators on the triangular lattice} \label{Stri}
	
	We consider  discrete periodic Schr\"odinger equations on the triangular lattice,
	\begin{equation}
	(\Delta_{\rm Tri} u)(n)+V(n)u(n)=\lambda u(n) \label{spect_03}, n\in\Z^2,
	\end{equation}
	with the   boundary condition
	\begin{equation}  
	u(n+q_j\textbf{e}_j)=e^{2\pi i k_j}u(n),j=1,2 \text{ and } n\in \Z^2, \label{FI3}
	\end{equation}
	where $ \{\textbf{e}_1=(0,1), \textbf{e}_2=(0,1)\}$  is the standard basis in $\R^2$, $V$ is $ \Gamma$-periodic  with  $\Gamma=q_1\Z\oplus q_2\Z$,  and $\Delta_{\rm Tri}$ is the discrete Laplacian on the triangular lattice, namely
	\begin{align*}
	(\Delta_{\rm Tri} u)(n_1,n_2)=&u(n_1+1,n_2)+u(n_1-1,n_2)+u(n_1,n_2+1)+u(n_1,n_2-1)\\
	&+u(n_1+1,n_2-1)+u(n_1-1,n_2+1).
	\end{align*}
	We say   $\Delta_{\rm Tri}+V$ and $\Delta_{\rm Tri}+Y$   are
	Floquet isospectral if   for any $k\in\R^2$,  $\Delta_{\rm Tri}+V$ and $\Delta_{\rm Tri}+Y$ with the   boundary  condition \eqref{FI3} 
	have  the same eigenvalues.
	\begin{theorem}\label{thmmain2tri}
		Assume that  real-valued functions $V$ and $Y$ on $\Z^2$ satisfy that $\Delta_{\rm Tri}+V$ and $\Delta_{\rm Tri}+Y$   are  Floquet isospectral, and $Y$ is   separable, then
		$V$ is   separable.
	\end{theorem}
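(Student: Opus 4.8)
The plan is to mirror the proof of Theorem \ref{thmmain2} step by step, isolating the single place where the triangular geometry enters. Since $V$ acts as a multiplication operator, the discrete Floquet transform for $\Delta_{\rm Tri}+V$ produces, exactly as in Lemma \ref{lesep}, a matrix unitarily equivalent to $A+B_V$ in which the off-diagonal block $B_V(n;n')=\hat V(n_1-n_1',\,n_2-n_2')$ is \emph{unchanged}; only the diagonal changes. Writing $w_i=\rho^i_{n_i}z_i$, the extra hoppings $u(n_1\pm1,n_2\mp1)$ contribute two new diagonal terms, so that
\begin{equation*}
A(n;n)=\left(w_1+\frac{1}{w_1}\right)+\left(w_2+\frac{1}{w_2}\right)+\left(\frac{w_1}{w_2}+\frac{w_2}{w_1}\right).
\end{equation*}
First I would record this computation as the triangular analog of Lemma \ref{lesep}.

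Next I would establish the triangular analog of Lemma \ref{key1}, which asserts both $[V]=[Y]$ (equation \eqref{g21}) and the reduced quadratic identity \eqref{g55}. The equality $\mathcal P_V\equiv\mathcal P_Y$ (Proposition \ref{le1}) is equivalent to $\operatorname{tr}((A+B_V)^k)=\operatorname{tr}((A+B_Y)^k)$ for all $k$; the linear-in-$B$ part gives \eqref{g21}, while collecting the part that is exactly quadratic in the off-diagonal block yields, after summing the resolvent series,
\begin{equation*}
\sum_{n,n'\in W}\frac{|\hat V(n-n')|^2}{(A(n;n)-\lambda)(A(n';n')-\lambda)}\equiv\sum_{n,n'\in W}\frac{|\hat Y(n-n')|^2}{(A(n;n)-\lambda)(A(n';n')-\lambda)},
\end{equation*}
where $|\hat V(n-n')|^2=B_V(n;n')B_V(n';n)$ exactly as in the square case (this is where real-valuedness, hence $\hat V(-m)=\overline{\hat V(m)}$, is used). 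The crucial observation is that applying the isotropic scaling $z_i\mapsto Rz_i$, $\lambda\mapsto R\lambda$ and letting $R\to\infty$ sends $A(n;n)-R\lambda$ to $R\bigl(\rho^1_{n_1}z_1+\rho^2_{n_2}z_2-\lambda\bigr)(1+O(1/R))$: the inverse terms $1/w_i$ and, decisively, the \emph{cross} terms $w_1/w_2$ and $w_2/w_1$ are $O(1)$ under this scaling, because the factors of $R$ cancel in $w_1/w_2$. Hence after multiplying by $R^2$ and passing to the limit, the reduced identity \eqref{g55} emerges verbatim, with denominators $-\lambda+\rho^1_{n_1}z_1+\rho^2_{n_2}z_2$ identical to the square-lattice case.

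Once \eqref{g55} is in hand with this reduced symbol, the remaining arguments transfer without change. Setting $z=0$ gives the Parseval identity \eqref{g4111}; setting $z_1=0$ and comparing the leading (double-zero) contributions along the line $-\lambda+z_2=0$ — using that $-\lambda+z_2$ and $-\lambda+\rho^2_{n_2}z_2$ are non-parallel for $n_2\not\equiv0$, since $\rho^2_{n_2}\neq1$ — gives the component identity $\sum_{l_1}|\hat V(l_1,0)|^2=\sum_{l_1}|\hat Y(l_1,0)|^2$, and symmetrically $\sum_{l_2}|\hat V(0,l_2)|^2=\sum_{l_2}|\hat Y(0,l_2)|^2$, exactly as in Theorem \ref{key4}. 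Combining these three identities with $[V]=[Y]$ via the inclusion–exclusion used in the proof of Theorem \ref{thmmain2} forces $\sum_{l_1\neq0,\,l_2\neq0}|\hat V(l_1,l_2)|^2=\sum_{l_1\neq0,\,l_2\neq0}|\hat Y(l_1,l_2)|^2=0$, whence $V$ is separable by the $d=2$ case of Lemma \ref{leSeparable1}.

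The hard part will be the second step: one must be sure that the additional triangular hoppings do not contribute at leading order to the quadratic resolvent identity, and this hinges entirely on the choice of scaling direction. Under the isotropic scaling $z_1\sim z_2\to\infty$ the cross terms are balanced (degree $0$) and drop out, reproducing the square-lattice symbol; had one scaled anisotropically, say $z_1=R,\ z_2=R^{-1}$, the term $w_1/w_2$ would dominate and the reduction would fail. Verifying that the isotropic limit is legitimate — that it may be taken inside the polynomial identity, and that the limiting denominators still satisfy the non-parallelism needed in the analog of Theorem \ref{key4} — is the only genuinely new point; everything else is the square-lattice proof applied verbatim.
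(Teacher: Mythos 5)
Your proposal follows the same route as the paper: the paper's proof of Theorem \ref{thmmain2tri} consists precisely of noting that Lemma \ref{lesep3} holds, that \eqref{g21} and \eqref{g55} then follow ``by a similar argument in the appendix,'' and that the proof of Theorem \ref{thmmain2} uses nothing but \eqref{g21} and \eqref{g55}. Your key observation --- that the extra triangular terms $w_1/w_2+w_2/w_1$ have total degree zero under the isotropic scaling $z_i\mapsto Rz_i$, $\lambda\mapsto R\lambda$, so the leading symbol of each diagonal entry is still $-\lambda+\rho^1_{n_1}z_1+\rho^2_{n_2}z_2$ --- is exactly the content of the paper's grading by $|a|+b$ in the appendix (where the Laurent monomial $z_1/z_2$ has $|a|=0$), and the rest of your argument (Parseval at $z=0$, the non-parallelism of $-\lambda+z_2$ and $-\lambda+\rho^2_{n_2}z_2$, inclusion--exclusion, positivity of $|\hat V|^2$) reproduces Theorem \ref{key4} and the proof of Theorem \ref{thmmain2} verbatim.

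One step is misordered, and as literally stated is unjustified: you first assert the \emph{exact} identity
\begin{equation*}
\sum_{n,n'\in W}\frac{|\hat V(n-n')|^2}{(A(n;n)-\lambda)(A(n';n')-\lambda)}\equiv\sum_{n,n'\in W}\frac{|\hat Y(n-n')|^2}{(A(n;n)-\lambda)(A(n';n')-\lambda)}
\end{equation*}
with the full triangular denominators, obtained by ``collecting the part that is exactly quadratic in the off-diagonal block.'' From $\det(A+B_V-\lambda I)\equiv\det(A+B_Y-\lambda I)$ alone one cannot separate the contributions of different orders in $B$; the cubic and higher terms could a priori compensate the quadratic ones, and indeed at degree $Q-3$ the subleading parts of the quadratic terms mix with the leading parts of the cubic terms, so this exact identity does not follow (and is not needed). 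The correct procedure --- which is what the appendix does and what your own ``crucial observation'' supplies --- is to apply the scaling (equivalently, the degree grading) to the determinant identity \emph{first}: at order $R^{Q-1}$ only the potential-independent cross terms and $[V]$ survive, giving \eqref{g21}; at order $R^{Q-2}$ the only potential-dependent contribution is the quadratic one with the \emph{reduced} denominators, giving \eqref{g55} directly. With that reordering your argument is complete and coincides with the paper's.
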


	%
	%

	We can  transfer all notions for discrete periodic Schr\"odinger equations on the lattice $\Z^2$ to the triangular lattice. For example, we will use $D_{{\rm Tri}, V}(k)$ to represent the equation \eqref{spect_03} with the boundary condition \eqref{FI3}.
	Similarly, we will use notations $ \mathcal{D}_{{\rm Tri},V}$,  $	\tilde{\mathcal{P}}_{{\rm Tri},V}(z, \lambda)$,
	$A_{\rm Tri}$ and $B_{{\rm Tri},V}$.
	
	By the standard discrete Floquet transform (e.g.,  \cite{flm22,ksurvey}), one has
	\begin{lemma}\label{lesep3} 
		The matrix  
		$\tilde{\mathcal{D}}_{{\rm Tri},V} (z)$ is unitarily equivalent to 		
		$
		A_{\rm Tri}+B_{{\rm Tri},V},
		$
		where $A_{\rm Tri}$ is a diagonal matrix with entries
		\begin{equation}\label{A3}
		A_{\rm Tri}(n;n^\prime)=\left(\rho^1_{n_1}z_1+\frac{1}{\rho^1_{n_1} z_1} +\rho^2_{n_2}z_2+\frac{1}{\rho^2_{n_2} z_2} +\frac{\rho^2_{n_2}z_2}{\rho^1_{n_1}z_1}+\frac{\rho^1_{n_1}z_1}{\rho^2_{n_2} z_2} \right) \delta_{n,n^{\prime}}
		\end{equation}
		and \begin{equation}\label{gb3}
		B_{{\rm Tri},V}(n;n^\prime)=\hat{V} \left(n_1-n_1^\prime,n_2-n_2^\prime\right).
		\end{equation}
		In particular,
		\begin{equation*}
		\tilde{\mathcal{P}}_{{\rm Tri},V}(z, \lambda) =\det(A_{{\rm Tri}}+B_{{\rm Tri},V}-\lambda I).
		\end{equation*}

	\end{lemma}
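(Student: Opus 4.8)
The plan is to realize $\tilde{\mathcal D}_{{\rm Tri},V}(z)$ as the matrix of $\Delta_{\rm Tri}+V$ acting on a finite-dimensional Floquet space and to diagonalize the kinetic part by an explicit Bloch basis, exactly paralleling the computation behind Lemma \ref{lesep}. Concretely, by the tilde substitution \eqref{gtm} the matrix $\tilde{\mathcal D}_{{\rm Tri},V}(z)=\mathcal D_{{\rm Tri},V}(z_1^{q_1},z_2^{q_2})$ is the matrix of $\Delta_{\rm Tri}+V$ restricted to the space $\tilde{\mathcal H}_z$ of functions $u:\Z^2\to\C$ obeying $u(n+q_j\mathbf{e}_j)=z_j^{q_j}u(n)$, written in the position basis $\{\delta_m:m\in W\}$. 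The substitution $z_j\mapsto z_j^{q_j}$ is precisely what makes the next step clean: for each $n\in W$ the function
\[
\phi_n(m)=\frac{1}{\sqrt Q}\prod_{j=1}^2\bigl(\rho^j_{n_j}z_j\bigr)^{m_j},\qquad m\in\Z^2,
\]
is single-valued (no fractional powers of $z_j$) and lies in $\tilde{\mathcal H}_z$, since $(\rho^j_{n_j}z_j)^{q_j}=z_j^{q_j}$ because $(\rho^j_{n_j})^{q_j}=1$.

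First I would check that $\{\phi_n\}_{n\in W}$ diagonalizes the Laplacian and reproduces the diagonal matrix $A_{\rm Tri}$. Since $\phi_n$ is a pure exponential, applying $\Delta_{\rm Tri}$ multiplies $\phi_n$ by the triangular symbol $h(w_1,w_2)=w_1+w_1^{-1}+w_2+w_2^{-1}+w_1w_2^{-1}+w_1^{-1}w_2$ evaluated at $w_j=\rho^j_{n_j}z_j$; the six summands correspond precisely to the six shifts $(\pm1,0),(0,\pm1),(\pm1,\mp1)$ defining $\Delta_{\rm Tri}$, and this is exactly the entry $A_{\rm Tri}(n;n)$ in \eqref{A3}. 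Next I would compute the matrix of multiplication by $V$ in the basis $\{\phi_n\}$. Expanding $Vu$ and using the inversion formula, the $z_j$ factors coming from $\phi_{n'}$ and $\overline{\phi_n}$ cancel, and the remaining sum $\frac1Q\sum_{m\in W}V(m)\prod_{j}(\rho^j_{n'_j-n_j})^{m_j}$ is, by definition of the discrete Fourier transform, exactly $\hat V(n-n')=B_{{\rm Tri},V}(n;n')$ as in \eqref{gb3}. Hence in the $\phi$-basis the operator $\Delta_{\rm Tri}+V$ has matrix $A_{\rm Tri}+B_{{\rm Tri},V}$.

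The final step is to package this as the claimed equivalence. For $k\in\R^2$, i.e.\ $|z_j|=1$, the family $\{\phi_n\}$ is orthonormal in $\tilde{\mathcal H}_z$ (the moduli $|z_j|^{2m_j}=1$ drop out, leaving an orthogonality relation among roots of unity), so the change of basis from $\{\delta_m\}$ to $\{\phi_n\}$ is unitary and $\tilde{\mathcal D}_{{\rm Tri},V}(z)$ is unitarily equivalent to $A_{\rm Tri}+B_{{\rm Tri},V}$. In particular the characteristic polynomials coincide, which is the displayed identity for $\tilde{\mathcal P}_{{\rm Tri},V}(z,\lambda)$ on the torus; since both sides are Laurent polynomials in $(z_1,z_2)$, the identity then extends to all $z\in(\C^\star)^2$ by analytic continuation.

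I expect the only delicate point to be bookkeeping rather than a real obstacle: one must keep the tilde substitution and the cell-gauge straight so that the Bloch waves $\phi_n$ are globally well-defined and the potential term collapses to a clean convolution by $\hat V$, and one must remember that orthonormality (hence unitarity) is available only on $|z_j|=1$, with the determinant identity carried to complex $z$ by analyticity. The triangular-lattice specifics enter only through the two extra diagonal-neighbor shifts, which are exactly what turn the square-lattice symbol into $h$ and account for the cross terms $\rho^2_{n_2}z_2/(\rho^1_{n_1}z_1)$ and $\rho^1_{n_1}z_1/(\rho^2_{n_2}z_2)$ in \eqref{A3}; everything else is identical to the $\Z^d$ computation underlying Lemma \ref{lesep}.
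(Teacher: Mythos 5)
Your proof is correct: the explicit Bloch basis $\phi_n$, the diagonalization of the kinetic part via the triangular symbol, the collapse of the potential term to $\hat V(n-n')$, and the care about unitarity holding on $|z_j|=1$ with the determinant identity extended by analyticity all check out against \eqref{A3} and \eqref{gb3}. This is precisely the standard discrete Floquet transform computation that the paper invokes by citation (it gives no written proof of Lemma \ref{lesep3}, deferring to the same argument behind Lemma \ref{lesep}), so your proposal fleshes out exactly the intended route.
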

	
	\begin{proof} [\bf  Proof of Theorem \ref{thmmain2tri}     ]
		By Lemma \ref{lesep3}, we can  first show that \eqref{g21} and \eqref{g55} hold by a similar argument in the appendix. One can see that the proof of Theorem \ref{thmmain2}  only uses   \eqref{g21} and \eqref{g55}. So  Theorem  \ref{thmmain2tri}   holds.

	\end{proof}

	%
	%

	\appendix
	\section{Proof of Lemma \ref{key1}}
	For $a=(a_1,a_2,\cdots,a_d) \in \Z^d$ and $z^a=z_1^{a_1}z_2^{a_2}\cdots z_d^{a_d}$, denote by $|a|=a_1+a_2+\cdots+a_d$ the degree of $z^a$. 
	\begin{proof}
		By  Prop. \ref{le1}, one has
		\begin{equation}\label{equ1}
		\mathcal{P}_V(z,\lambda)\equiv\mathcal{P}_{Y}(z,\lambda),
		\end{equation}
		and	 hence
		\begin{equation}\label{equ28}
		\tilde{\mathcal{P}}_V(z,\lambda)\equiv \tilde{\mathcal{P}}_{Y}(z,\lambda).
		\end{equation}
		
		For any $n=(n_1,n_2,\cdots,n_d)\in W$, let
		\begin{equation}
		t_{n}(z,x)=x+\sum_{j=1}^d \left(\rho^j_{n_j}z_j+\frac{1}{\rho^j_{n_j}z_j} \right).
		\end{equation}
		Clearly, $t_{n}(z,[V]-\lambda)$ is the $n$-th diagonal entry of the matrix $A+B_V$.
		By Lemma \ref{lesep}, 
		direct calculations imply that 
		\begin{align}
		\tilde{\mathcal{P}}_V(z,\lambda)=&\det (A+B_V-\lambda I)\nonumber\\
		=&
		\prod_{n\in W}t_{n}(z,[V]-\lambda)-\frac{1}{2} \sum_{ n\in W, n^\prime \in W\atop{n\neq n^\prime} }
		\frac{\prod_{n\in W}t_{n}(z,[V]-\lambda) }{t_{n^\prime}(z,[V]-\lambda)t_{n}(z,[V]-\lambda)} |B_V(n;n^\prime)|^2\nonumber\\
		&+\text{ products of at most } q_1q_2\cdots q_d-3 \text{ terms of } t_{n}(z,[V]-\lambda).\label{gtmp}
		\end{align}
		Let 
		\begin{equation*}
		h(z ,\lambda)=\prod_{n\in W}\left(-\lambda+\sum_{j=1}^d \rho^j_{n_j} z_j\right),
		\end{equation*}
		and 
		$h^1_V(z,\lambda)$ be all terms in $\tilde{\mathcal{P}}_V(z,\lambda)$ consisting of $z^a\lambda^b$ with $|a|+b=q_1q_2\cdots q_d-1$.

		By \eqref{gtmp}, one has  that	$h^1_V(z,\lambda)$  must come from $\prod_{n\in W}t_{n}(z,[V]-\lambda)$ and hence  
		\begin{equation}\label{equ2}
		h^1_V(z,\lambda)=[V]\left(\sum_{n\in W} \frac{h(z,\lambda)}{-\lambda+ \sum_{j=1}^d \rho^j_{n_j}z_j}\right).
		\end{equation}
		
		By \eqref{equ28} and \eqref{equ2}, one has
		\begin{equation}\label{g1}
		\sum_{n\in W} \frac{[V]}{ -\lambda+\sum_{j=1}^d \rho^j_{n_j}z_j}\equiv \sum_{n\in W} \frac{[Y]}{ -\lambda+\sum_{j=1}^d \rho^j_{n_j}z_j}.
		\end{equation}
		This implies $[V]=[Y]$.

		Let  ${h}_V^2(z,\lambda)$ ($\bar{h}_V^2(z,\lambda)$) be  all terms in  $\tilde{\mathcal{P}}_V(z,\lambda)$ ($\prod_{n\in W}t_{n}(z,[V]-\lambda)$)
		consisting of $z^a\lambda^b$ with $|a|+b=q_1q_2\cdots q_d-2$.
		
		By the fact that  $[V]=[Y]$, one has that 
		$\bar{h}_V^2(z,\lambda)\equiv\bar{h}_Y^2(z,\lambda)$. Therefore, by~\eqref{equ28}, we have that 
		\begin{equation}\label{g22}
		h^2_V(z,\lambda) -\bar{h}_V^2(z,\lambda)\equiv	h^2_Y(z,\lambda) -\bar{h}_Y^2(z,\lambda).
		\end{equation}
		By \eqref{gtmp}, one has that
		\begin{equation}\label{equ3}
		h^2_V(z,\lambda)-	\bar{h}^2_V(z,\lambda)=\frac{h(z,\lambda)}{2} \sum_{ n\in W, n^\prime \in W\atop{n\neq n^\prime} }
		\frac{-|B_V(n;n^\prime)|^2}{(-\lambda+\sum_{j=1}^d \rho^j_{n_j}z_j) (-\lambda+\sum_{j=1}^d \rho^j_{n^\prime_j}z_j) }.
		\end{equation}
		By \eqref{g22} and \eqref{equ3}, one has
		\begin{align*}
		\sum_{ n\in W, n^\prime \in W\atop{n\neq n^\prime} }&
		\frac{|B_V(n;n^\prime)|^2}{(-\lambda+\sum_{j=1}^d \rho^j_{n_j}z_j) (-\lambda+\sum_{j=1}^d \rho^j_{n^\prime_j}z_j) } \\
		&\equiv \sum_{ n\in W, n^\prime \in W\atop{n\neq n^\prime} }
		\frac{|B_Y(n;n^\prime)|^2}{(-\lambda+\sum_{j=1}^d \rho^j_{n_j}z_j) (-\lambda+\sum_{j=1}^d \rho^j_{n^\prime_j}z_j) }  ,
		\end{align*}
		and hence
		\begin{align}
		\sum_{ n\in W, n^\prime \in W\atop{n\neq n^\prime} }&
		\frac{|\hat{V}(n-n^\prime)|^2}{(-\lambda+\sum_{j=1}^d \rho^j_{n_j}z_j) (-\lambda+\sum_{j=1}^d \rho^j_{n^\prime_j}z_j) }\nonumber\\
		& \equiv \sum_{ n\in W, n^\prime \in W\atop{n\neq n^\prime} }
		\frac{|\hat{Y}(n-n^\prime)|^2}{(-\lambda+\sum_{j=1}^d \rho^j_{n_j}z_j) (-\lambda+\sum_{j=1}^d \rho^j_{n^\prime_j}z_j) }.\label{g4}
		\end{align}
		By \eqref{g21} and \eqref{g4}, one has \eqref{g55}.
	\end{proof}

\section*{Acknowledgments}

W. Liu was 
supported by NSF  DMS-2000345 and DMS-2052572.
 


\end{document}